\numberwithin{equation}{subsection}
\newtheorem{theorem}{Theorem}[section]
\newtheorem{lemma}[theorem]{Lemma}
\newtheorem{conjecture}[theorem]{Conjecture}
\newtheorem{remark}[theorem]{Remark}
\newtheorem{proposition}[theorem]{Proposition}
\newtheorem{corollary}[theorem]{Corollary}
\begin{document}

\title{Double Zeta Values and Picard-Fuchs equation}

\author{Wenzhe Yang}
\address{SITP Stanford University, CA, 94305}
\email{yangwz@stanford.edu}

\begin{abstract}
In this paper we will study the double zeta values $\zeta(k,m)$ using Picard-Fuchs equation. We will give a very efficient method to evaluate $\zeta(k,1)$ (resp. $\zeta(k,2)$) in terms of the products of zeta values $\zeta(2),\zeta(3),\cdots$ when $k$ is even (resp. odd), which admits immediate generalization to arbitrary double zeta values. Moreover, this method provides new insights into the nature of double zeta values, which further can be generalized to arbitrary multiple zeta values.
\end{abstract}

%\vspace{-30pt}
\maketitle
\setcounter{tocdepth}{1}
\vspace{-13pt}
\tableofcontents
\vspace{-13pt}

\section{Introduction}
The multiple zeta functions, as generalizations of the Riemann zeta function, are defined by the infinite sum
\begin{equation}
\zeta(s_1,\cdots,s_l)=\sum_{n_1>n_2>\cdots >n_l>0} \frac{1}{n_1^{s_1}\cdots n_l^{s_l}},
\end{equation}
which converge on the region where $\text{Re}(s_1)+\cdots \text{Re}(s_i)>i$ for all $i$. Like the Riemann zeta function, the multiple zeta functions can also be analytically continued to meromorphic functions on $\mathbb{C}^l$. When $s_1,\cdots,s_l$ are positive integers with $s_1 \geq 2$, these infinite sums are called multiple zeta values (MZVs), which are very important objects in number theory. The integer $l$ is called the length of the MZV, while $\sum_{i=1}^l s_i$ is called the weight of the MZV. In this paper, we will focus our attention on the case where $l=2$, i.e. double zeta values (DZVs), and we will use Picard-Fuchs equation to study them. 

We now briefly explain the method for the DZV $\zeta(k,1),~k\geq 2$. By definition, $\zeta(k,1)$ is given by
\begin{equation}
\zeta(k,1)=\sum_{n> m \geq 1} \frac{1}{n^k m}=\sum_{n=1}^\infty \frac{H_{n,1}}{(n+1)^k},
\end{equation}
where the harmonic number $H_{n,1}$ is defined by $\sum_{m=1}^n 1/m$. Next, we construct a power series
\begin{equation}
\Pi_{k,1}=\sum_{n=1}^\infty \frac{(-1)^n H_{n,1}}{(n+1)^k} \phi^{n+1},
\end{equation}
which satisfies the Picard-Fuchs equation
 \begin{equation}
\left( (1+\phi)^2 \vartheta^{k+2}-3(1+\phi)\vartheta^{k+1}+(2+\phi)\vartheta^k \right)\Pi_{k,1}=0,~ \vartheta=\phi \frac{d}{d \phi}.
 \end{equation}
Let $\varphi$ be $1/\phi$, and on the unit disc $|\varphi|<1$, the $k+2$ dimensional solution space of the Picard-Fuchs equation has a canonical basis of the form
\begin{equation}
\begin{aligned}
\varpi^{k,1}_i&=\log^i \varphi,~i=0,1,\cdots,k-1;\\
\varpi^{k,1}_{k}&=\log^k \varphi+\sum_{n=1}^\infty k! \frac{(-1)^n}{n^k}\, \varphi^n;\\
\varpi^{k,1}_{k+1}&=\log^{k+1} \varphi+(k+1)\left( \sum_{n=1}^\infty k! \frac{(-1)^n}{n^k} \varphi^{n} \right) \log \varphi \\
&+(1-k)(k+1)!\sum_{n=1}^\infty \frac{(-1)^n}{n^{k+1}} \varphi^n-(k+1)! \sum_{n=1}^\infty \frac{(-1)^nH_{n,1}}{(n+1)^k} \varphi^{n+1}.
\end{aligned}
\end{equation}
With respect to this canonical basis, there exist $k+2$ complex numbers $\{\tau^{k,1}_i \}_{i=0}^{k+1}$ such that
\begin{equation}
\Pi_{k,1}=\sum_{i=0}^{k+1} \tau^{k,1}_i \,\varpi^{k,1}_i,
\end{equation}
while $\tau^{k,1}_i$ can be computed by Fourier analysis on the unit circle
\begin{equation}
S^1=\{|\phi|=1 \}=\{|\varphi|=1 \}.
\end{equation}
More explicitly, for various $n$, the equation
\begin{equation} \label{eq:introductionlineartaui}
\int_{-1/2}^{1/2} \phi^{-n} \Pi_{k,1} dt=\sum_{i=0}^{k+1} \tau^{k,1}_i \int_{-1/2}^{1/2} \varphi^n \varpi^{k,1}_i dt
\end{equation}
gives us various linear equations about $\tau^{k,1}_i$, solving which yields the values of $\tau^{k,1}_i$. In fact, the integrals
\begin{equation}
\int_{-1/2}^{1/2} \phi^{-n} \Pi_{k,1} dt~\text{and}~ \int_{-1/2}^{1/2} \varphi^n \varpi^{k,1}_i dt
\end{equation}
can be evaluated explicitly, and they lie in the field $\mathbb{Q}(\pi,\zeta(2),\zeta(3),\cdots,\zeta(k+1))$, hence we conclude that $\tau^{k,1}_i$ also lies in this field. When $k=2$, we have
\begin{equation}
\tau^{2,1}_0=-\zeta(3),~\tau^{2,1}_1=0,~\tau^{2,1}_2=0,~\tau^{2,1}_3=\frac{1}{6}.
\end{equation}
We have also computed $\tau^{k,1}_i$ for $k=3,4,\cdots,9$, and our computations have shown that
\begin{equation}\label{eq:introductionrecursive}
\tau^{k,1}_i=-\frac{1}{i}\tau^{k-1,1}_{i-1},~i \geq 1,
\end{equation}
so we conjecture this equation is valid for all $k$. On the other hand, if we let $n=0$ in the formula \ref{eq:introductionlineartaui}, we have 
\begin{equation} \label{eq:introductiontau0}
\tau^{k,1}_0=- \sum_{i=1}^{k+1} \tau^{k,1}_i \int_{-1/2}^{1/2}  \varpi^{k,1}_i dt,
\end{equation}
while the integrals appear in this equation are given by
\begin{equation}
\begin{aligned}
\int_{-1/2}^{1/2}  \varpi^{k,1}_j dt&=\frac{\left(1+(-1)^j \right) (\pi i)^j}{2(1+j)},~j=1,\cdots,k;\\
\int_{-1/2}^{1/2}  \varpi^{k,1}_{k+1} dt&=\frac{\left(1+(-1)^{k+1} \right) (\pi i)^{k+1}}{2(k+2)}+(k+1)!\zeta(k+1).\\
\end{aligned}
\end{equation}
The upshot is that formulas \ref{eq:introductionrecursive} and \ref{eq:introductiontau0} together give us a very efficient method to explicitly compute $\tau^{k,1}_i$ in terms of the zeta values. In particular, we have
\begin{equation}
\tau^{k,1}_{k+1}=(-1)^k/(k+1)!.
\end{equation}
When $k$ is even, $\tau^{k,1}_{k+1}$ is positive, and the equation
\begin{equation} \label{eq:introductionk1minus1}
\Pi_{k,1}(-1)=\sum_{i=0}^{k+1} \tau^{k,1}_i \varpi^{k,1}_i(-1)
\end{equation}
in fact yields an evaluation of $\zeta(k,1)$ in terms of the zeta values $\zeta(2),\cdots,\zeta(k+1)$. For example, when $k=2$, this equation is equivalent to
\begin{equation}
\zeta(2,1)=\zeta(3),
\end{equation}
which is first proved by Euler. Similarly, the equation 
\begin{equation}  \label{eq:introductionk11}
\Pi_{k,1}(1)=\sum_{i=0}^{k+1} \tau^{k,1}_i \varpi^{k,1}_i(1)
\end{equation}
yields an evaluation of 
\begin{equation}
\sum_{n=1}^\infty \frac{(-1)^nH_{n,1}}{(n+1)^k},
\end{equation}
in terms of the zeta values. However when $k$ is odd, the two equations \ref{eq:introductionk1minus1} and \ref{eq:introductionk11} only give us the trivial identity $0=0$. We will use the same method to study $\zeta(k,2)$, and it also gives us an efficient method to evaluate $\zeta(k,2)$ in terms of the zeta values when $k$ is odd. The method in this paper certainly admits generalizations to arbitrary double zeta values, which provides new insights into these interesting numbers. Moreover, this method can also be applied to study MZVs.

The outline of this paper is as follows. Section \ref{sec:introductoryexamplepi} is a short review about how our method works for a toy example. Section \ref{sec:zetak1section} studies the double zeta values $\zeta(k,1)$. Section \ref{sec:doublezetak2} is about $\zeta(k,2)$. In Section \ref{sec:furtherprospects}, we will list several interesting open questions.

\section{A Toy example} \label{sec:introductoryexamplepi}

In this section, we will introduce a toy example that nevertheless illustrates the idea of the method in this paper. The value of the Riemann zeta function $\zeta(s)$ at the integer point $s=2$ is given by
\begin{equation}
\zeta(2)=\sum_{n=1}^\infty \frac{1}{n^2}=\frac{\pi^2}{6}.
\end{equation}
We now construct a power series $\Pi_0$ of the form
\begin{equation}
\Pi_0(\phi)=\sum_{n=1}^\infty \frac{(-1)^n}{n^2} \phi^n,
\end{equation}
which converges on the unit disc $|\phi|<1$. Moreover, $\Pi_0(\phi)$ actually converges absolutely on the unit circle $S^1$ ($|\phi|=1$), thus it defines a continuous function on it, while the value of $\Pi_0$ at $\phi=-1$ is just $\zeta(2)$. It is well-known that the power series $\Pi_0(\phi)$ satisfies a third order Picard-Fuchs equation
\begin{equation} \label{eq:examplepfequation}
\left( \left(1+\phi \right) \vartheta^3-\vartheta^2 \right)\Pi_0(\phi)=0, ~\vartheta=\phi \frac{d}{d \phi},
\end{equation}
which has three regular singularities at $\phi=0,1,\infty$. By analytic continuation, $\Pi_0$ extends to a multi-valued holomorphic function on $\mathbb{C} -\{0,1 \}$. Let us now define the variable $\varphi$ by
\begin{equation}
\varphi:=1/\phi.
\end{equation}
On the unit disc $|\varphi|<1$, the solution space of the Picard-Fuchs equation \ref{eq:examplepfequation} has a canonical basis of the form
\begin{equation}
\begin{aligned}
\varpi_0&=1, \\
\varpi_1&=\log \varphi, \\
\varpi_2&=\log^2 \varphi+2 \sum_{n=1}^{\infty} \frac{(-1)^n}{n^2}\varphi^n.
\end{aligned}
\end{equation}
In this paper, the unit circle $S^1$ will be parameterized in the way
\begin{equation} \label{eq:parameterofvarphi}
\varphi=\exp 2 \pi i\,t,~ -\frac{1}{2} < t \leq \frac{1}{2},
\end{equation}
while $\log \varphi$ defines a single-valued function on $S^1$ through
\begin{equation} \label{eq:fndefinedbylogvarphi}
\log \varphi=\log \exp 2 \pi i \,t=2 \pi i\,t,\, -\frac{1}{2} < t \leq \frac{1}{2}.
\end{equation}
With respect to the canonical basis $\{\varpi_i \}_{i=0}^2$, $\Pi_0$ has an expansion of the form
\begin{equation}
\Pi_0=\tau_0 \varpi_0+\tau_1 \varpi_1+\tau_2 \varpi_2,~\tau_i \in \mathbb{C}.
\end{equation}
We now explain how to compute the complex number $\tau_i$ using Fourier analysis. On the unit circle $S^1$, we have
\begin{equation} \label{eq:toyexamplefourierseries}
\sum_{n=1}^\infty \frac{(-1)^n}{n^2} \exp (-2 \pi i n\,t)- 2\,\tau_2\, \sum_{n=1}^{\infty} \frac{(-1)^n}{n^2}\exp (2 \pi i n\,t)=\tau_0+2 \pi i \,\tau_1 \,t+(2 \pi i)^2 \tau_2 t^2,
\end{equation}
and the LHS is just the Fourier series expansion of the RHS. Now take the integration of both sides of the above equation over $S^1$, and we have
\begin{equation}
0=\tau_0-\frac{1}{3} \pi^2 \tau_2.
\end{equation}
While the equation $\int_{-1/2}^{1/2}\phi^{-n} \text{LHS} ~dt=\int_{-1/2}^{1/2}\varphi^{n} \text{RHS} ~dt$ for various $n$, e.g. $n=1,2$ yields more linear equations about $\tau_i$. Solve these linear equations and we obtain
\begin{equation}
\tau_0=-\frac{1}{6}\pi^2,~\tau_1=0,~\tau_2=-\frac{1}{2},
\end{equation}
and the formula \ref{eq:toyexamplefourierseries} now becomes
\begin{equation}
\sum_{n=1}^\infty \frac{(-1)^n}{n^2} \cos 2 \pi n \,t=-\frac{1}{12}\pi^2+\pi^2\, t^2.
\end{equation}

\section{Double Zeta values \texorpdfstring{$\zeta(k,1)$}{z(k,1)}} \label{sec:zetak1section}

In this section, we will apply the method in Section \ref{sec:introductoryexamplepi} to study the double zeta values $\zeta(k,1)$, which by definition is given by
\begin{equation}
\zeta(k,1)=\sum_{n> m \geq 1} \frac{1}{n^k m}=\sum_{n=2}^\infty \frac{1}{n^k} \sum_{m=1}^{n-1} \frac{1}{m}=\sum_{n=1}^\infty \frac{1}{(n+1)^k} \sum_{m=1}^{n} \frac{1}{m}.
\end{equation}
The harmonic numbers $H_{n,t}$ are defined by
\begin{equation}
H_{n,t}:=\sum_{m=1}^n \frac{1}{m^t}, ~ t \in \mathbb{Z}_+,
\end{equation}
hence the double zeta value $\zeta(k,1)$ can also be written as
\begin{equation}
\zeta(k,1)=\sum_{n=1}^\infty \frac{H_{n,1}}{(n+1)^k}.
\end{equation}
Follow Section \ref{sec:introductoryexamplepi}, we construct a power series $\Pi_{k,1}$
\begin{equation}
\Pi_{k,1}:=\sum_{n=1}^\infty \frac{(-1)^n H_{n,1}}{(n+1)^k} \phi^{n+1},
\end{equation}
which converges on the unit disc $|\phi| \leq 1$, and its value at $\phi=-1$ is $-\zeta(k,1)$. 
 \begin{lemma}
 The power series $\Pi_{k,1}$ is a solution to the Picard-Fuchs operator $\mathscr{D}_{k,1}$
 \begin{equation}
\mathscr{D}_{k,1}:=(1+\phi)^2 \vartheta^{k+2}-3(1+\phi)\vartheta^{k+1}+(2+\phi)\vartheta^k, \vartheta=\phi \frac{d}{d \phi}
 \end{equation}
 \end{lemma}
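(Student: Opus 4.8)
The plan is to prove the identity by comparing the coefficients of $\phi^m$ on the two sides of $\mathscr{D}_{k,1}\Pi_{k,1}=0$. The crucial simplification is that the operator factors as $\mathscr{D}_{k,1}=L\circ\vartheta^k$, where
\[
L:=(1+\phi)^2\vartheta^2-3(1+\phi)\vartheta+(2+\phi).
\]
This is a legitimate operator identity because each summand of $\mathscr{D}_{k,1}$ is a polynomial in $\phi$ times a power of $\vartheta$, and $\vartheta^{k+2}=\vartheta^2\vartheta^k$, $\vartheta^{k+1}=\vartheta\vartheta^k$; here the bracket $L$ is applied \emph{after} $\vartheta^k$, so the noncommutativity of $\vartheta$ with multiplication by $\phi$ causes no trouble. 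Since $\vartheta\phi^{n+1}=(n+1)\phi^{n+1}$, applying $\vartheta^k$ cancels exactly the weight $1/(n+1)^k$: setting $g:=\vartheta^k\Pi_{k,1}$ one gets $g=\sum_{n\ge1}(-1)^nH_{n,1}\phi^{n+1}$, and after the reindexing $m=n+1$,
\[
g=\sum_{m\ge2}b_m\,\phi^m,\qquad b_m=(-1)^{m-1}H_{m-1,1},
\]
with the convention $b_m=0$ for $m\le1$ (so $H_{0,1}=0$). The key point is that $g$ no longer depends on $k$, so it suffices to establish the single statement $Lg=0$.

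First I would extract the coefficient of $\phi^m$ in $Lg$. Expanding $(1+\phi)^2=1+2\phi+\phi^2$ and $(1+\phi)$ and using $\vartheta^j\phi^m=m^j\phi^m$, this coefficient is a linear combination of $b_m,b_{m-1},b_{m-2}$. Collecting terms, the coefficient of $b_m$ is $m^2-3m+2=(m-1)(m-2)$, the coefficient of $b_{m-1}$ is $2(m-1)^2-3(m-1)+1=(2m-3)(m-2)$, and the coefficient of $b_{m-2}$ is $(m-2)^2$. These factorizations — in particular the common factor $(m-2)$ in the first two — are exactly what make the low-order contributions $m=1,2,3$ vanish automatically, so no separate boundary analysis is needed.

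Next I would substitute $b_m=(-1)^{m-1}H_{m-1,1}$, $b_{m-1}=-(-1)^{m-1}H_{m-2,1}$, and $b_{m-2}=(-1)^{m-1}H_{m-3,1}$, and factor out $(-1)^{m-1}$. The vanishing of the $\phi^m$-coefficient then reduces to the purely harmonic identity
\[
(m-1)(m-2)\,H_{m-1,1}-(2m-3)(m-2)\,H_{m-2,1}+(m-2)^2\,H_{m-3,1}=0 .
\]
This I would prove using the recurrence $H_{n,1}=H_{n-1,1}+1/n$: writing $H_{m-1,1}=H_{m-3,1}+\tfrac{1}{m-2}+\tfrac{1}{m-1}$ and $H_{m-2,1}=H_{m-3,1}+\tfrac{1}{m-2}$, the coefficient of $H_{m-3,1}$ becomes $(m-2)\bigl[(m-1)-(2m-3)+(m-2)\bigr]=0$, while the remaining explicit fractions collapse to $(2m-3)-(2m-3)=0$. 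This finishes the verification.

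The only genuine difficulty here is organizational rather than conceptual: one must track the reindexing $n\mapsto m=n+1$ carefully and adopt the convention $H_{n,1}=0$ for $n\le 0$ consistently, so that the three-term recurrence and the lowest-order terms are handled without spurious contributions. As an optional sanity check — and a more conceptual route to the same conclusion — one may observe that $g=\vartheta^k\Pi_{k,1}$ has the closed form $-\phi\log(1+\phi)/(1+\phi)$, since $\sum_{j\ge0}H_{j,1}x^j=-\log(1-x)/(1-x)$, and then verify directly that $L$ annihilates this elementary function.
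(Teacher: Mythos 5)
Your proof is correct, and it reaches the same computational core as the paper by a cleaner route. The paper posits a power-series solution $\sum_{n\ge 2}a_n\phi^n$ normalized by $a_2=-2^{-k}$, reads off the three-term recursion
\begin{equation*}
(n-1)^{k+1}a_{n-1}+n^k(2n-1)a_n+n(n+1)^ka_{n+1}=0,
\end{equation*}
and asserts that it is solved by $a_n=(-1)^{n-1}H_{n-1,1}/n^k$; the $k$-th powers cancel only after substituting this ansatz, at which point one is left with exactly the harmonic identity $nH_{n,1}-(2n-1)H_{n-1,1}+(n-1)H_{n-2,1}=0$. Your factorization $\mathscr{D}_{k,1}=L\circ\vartheta^k$ performs that cancellation at the level of the operator, before any coefficient computation: $g=\vartheta^k\Pi_{k,1}$ is $k$-independent, the verification collapses to the single second-order statement $Lg=0$, and after dividing your $\phi^m$-identity by $(m-2)$ and shifting $m=n+1$ it is literally the same harmonic recurrence the paper uses. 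What your version buys is transparency (the $k$-independence is manifest rather than emergent), plus the closed form $g=-\phi\log(1+\phi)/(1+\phi)$, which gives an independent analytic check the paper does not have. One small caveat: your remark that the common factor $(m-2)$ disposes of $m=1,2,3$ is slightly loose at $m=3$, where the coefficients $2,3,1$ do not vanish; that case is instead covered by the general harmonic argument with the convention $H_{0,1}=0$ (giving $2H_{2,1}-3H_{1,1}+0=0$), so nothing is actually missing, but the sentence should be phrased as ``$m\le 2$ is killed by the $(m-2)$ factors and the vanishing of $b_m$ for $m\le 1$, while $m\ge 3$ follows uniformly from the harmonic identity.''
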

\begin{proof}
The Picard-Fuchs operator $\mathscr{D}_{k,1}$ is a linear operator, and its solution space is $k+2$ dimensional. Suppose there exists a power series solution of the form
\begin{equation}
\sum_{n=2}^\infty a_n \phi^n,~\text{with}~a_2=-2^{-k}.
\end{equation}
In order for it to be a solution of the Picard-Fuchs operator $\mathscr{D}_{k,1}$, we must have
\begin{equation}
a_3=H_{2,1}\,3^{-k},~(n-1)^{k+1}a_{n-1}+n^k(2n-1)a_n+n(n+1)^ka_{n+1}=0.
\end{equation}
This recursion equation can be solved explicitly, and we obtain
\begin{equation}
a_n=\frac{(-1)^{n-1}H_{n-1,1}}{n^k},
\end{equation}
which proves this lemma.
\end{proof}
Recall that the variable $\varphi$ is defined to be $1/\phi$, with respect to which the Picard-Fuchs operator $\mathscr{D}_{k,1}$ (after a multiplication by $(-1)^{k+2}\varphi^2$) becomes
 \begin{equation}
 \mathscr{D}_{k,1}=(1+\varphi)^2 \vartheta^{k+2}+3\varphi(1+\varphi) \vartheta^{k+1}+\varphi(1+2 \varphi) \vartheta^k,~\vartheta=\varphi \frac{d }{d \varphi}.
 \end{equation}
Since the order of $\mathscr{D}_{k,1}$ is $k+2$, the dimension of its solution space is $k+2$. Now we will construct a canonical basis for it on the unit disc $|\varphi|<1$. First, from the form of $\mathscr{D}_{k,1}$, it has $k$ solutions of the form
\begin{equation}
\varpi^{k,1}_i=\log^i \varphi,~i=0,1,\cdots,k-1.
\end{equation}
We now try whether there exists a solution of the form
\begin{equation}
\log^k \varphi+\sum_{n=1}^\infty b_n\, \varphi^n,
\end{equation}
and in order for it to be a solution of $\mathscr{D}_{k,1}$, we must have
\begin{equation}
b_1=-k!,~b_2=\frac{k!}{2^k},~(n-1)^k n b_{n-1}+n^k(2n+1)b_n+(n+1)^{k+1}b_{n+1}=0.
\end{equation}
This recursion equation can be solved explicitly and we obtain
\begin{equation}
b_n=k! \frac{(-1)^n}{n^k}.
\end{equation}
Next, we try whether there exists a solution of the form
\begin{equation}
\log^{k+1} \varphi+(k+1)\left( \sum_{n=1}^\infty k! \frac{(-1)^n}{n^k} \varphi^{n} \right) \log \varphi+\sum_{n=1}^\infty c_n \varphi^n.
\end{equation}
Plug it into $\mathscr{D}_{k,1}$, we obtain a recursion equation about $c_n$
\begin{equation}
\begin{aligned}
&c_1=(k-1)(k+1)!,~c_2=-\frac{(k-3)(k+1)!}{2^{k+1}},\\
&(n+1)^{k+1}c_{n+1}+n^k(2n+1)c_n+(n-1)^knc_{n-1}+\frac{(-1)^{n+1}k (k+1)!}{n(n-1)}=0.
\end{aligned}
\end{equation}
This recursion equation can also be solved explicitly and we obtain
\begin{equation}
c_n=\frac{(k+1)!(-1)^n(-k+n H_{n,1})}{n^{k+1}}.
\end{equation}
Thus we have the following proposition.
\begin{proposition}
On the unit disc $|\varphi|<1$, the $k+2$ dimensional solution space of the Picard-Fuchs operator $\mathscr{D}_{k,1}$ has a canonical basis given by
\begin{equation}
\begin{aligned}
\varpi^{k,1}_i&=\log^i \varphi,~i=0,1,\cdots,k-1.\\
\varpi^{k,1}_{k}&=\log^k \varphi+\sum_{n=1}^\infty k! \frac{(-1)^n}{n^k}\, \varphi^n,\\
\varpi^{k,1}_{k+1}&=\log^{k+1} \varphi+(k+1)\left( \sum_{n=1}^\infty k! \frac{(-1)^n}{n^k} \varphi^{n} \right) \log \varphi+\sum_{n=1}^\infty \frac{(k+1)!(-1)^n(-k+n H_{n,1})}{n^{k+1}} \varphi^n.
\end{aligned}
\end{equation}
\end{proposition}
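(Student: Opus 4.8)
The plan is to prove the proposition in two stages: first verify that each of the $k+2$ listed functions lies in the kernel of $\mathscr{D}_{k,1}$, and then verify that they are linearly independent, so that—since the solution space is $(k+2)$-dimensional—they constitute a basis. The organizing observation I would exploit is that the operator factors cleanly as
\begin{equation}
\mathscr{D}_{k,1}=\mathscr{L}\,\vartheta^k,\qquad \mathscr{L}:=(1+\varphi)^2\vartheta^2+3\varphi(1+\varphi)\vartheta+\varphi(1+2\varphi),
\end{equation}
which holds because every coefficient stands to the left of the $\vartheta$-powers and $\vartheta^{k+2}=\vartheta^2\vartheta^k$, $\vartheta^{k+1}=\vartheta\vartheta^k$. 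This single identity explains the entire shape of the basis: $\vartheta^k$ has kernel $\mathrm{span}\{1,\log\varphi,\dots,\log^{k-1}\varphi\}$ since $\vartheta\log^j\varphi=j\log^{j-1}\varphi$, which at once produces the $k$ solutions $\varpi^{k,1}_i=\log^i\varphi$ for $i<k$; the remaining two solutions are obtained by lifting a basis of the two-dimensional kernel of the second-order operator $\mathscr{L}$ back through $\vartheta^k$, i.e. by solving $\vartheta^k f=g$ for $g\in\ker\mathscr{L}$, and this lifting introduces precisely the $\log$-powers seen in $\varpi^{k,1}_k$ and $\varpi^{k,1}_{k+1}$.

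For the two nontrivial solutions I would first locate $\ker\mathscr{L}$. Substituting $\sum d_n\varphi^n$ into $\mathscr{L}$ produces the recursion $m\,d_m+(2m-1)d_{m-1}+(m-1)d_{m-2}=0$, whose holomorphic solution is $d_n=(-1)^n$, i.e. $g_0=(1+\varphi)^{-1}$; the second, logarithmic solution has the form $g_0\log\varphi+(\text{power series})$. One then checks that $\vartheta^k\varpi^{k,1}_k=k!\,(1+\varphi)^{-1}=k!\,g_0\in\ker\mathscr{L}$, using $\vartheta^k\log^k\varphi=k!$ and $\vartheta^k\sum b_n\varphi^n=k!\sum(-1)^n\varphi^n$, whence $\mathscr{D}_{k,1}\varpi^{k,1}_k=\mathscr{L}(\vartheta^k\varpi^{k,1}_k)=0$. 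Equivalently—and this is the route already prepared in the text—one substitutes the ansatz directly into $\mathscr{D}_{k,1}$, reads off the recursion for $b_n$, and confirms by induction that $b_n=k!(-1)^n/n^k$ solves it with the stated seeds. The solution $\varpi^{k,1}_{k+1}$ is treated the same way: applying $\vartheta^k$ sends it to a multiple of the logarithmic solution of $\mathscr{L}$, and substituting its ansatz into $\mathscr{D}_{k,1}$ yields the inhomogeneous recursion for $c_n$ recorded above, which I would verify is solved by $c_n=(k+1)!(-1)^n(-k+nH_{n,1})/n^{k+1}$.

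The hard part is exactly this last verification. The recursion for $c_n$ carries the inhomogeneous source $(-1)^{n+1}k(k+1)!/(n(n-1))$, which arises from the cross-term $(k+1)\bigl(\sum b_n\varphi^n\bigr)\log\varphi$ once $\vartheta$ is applied through its product rule $\vartheta(F\log\varphi)=(\vartheta F)\log\varphi+F$; keeping track of these $\log\varphi$ contributions and confirming that the bare $\log\varphi$-terms cancel against $\vartheta^k\log^{k+1}\varphi=(k+1)!\log\varphi$, leaving exactly $(k+1)!\,g_0\log\varphi$, is the delicate bookkeeping. The induction step then needs the identity $H_{n,1}=H_{n-1,1}+1/n$ to telescope the harmonic numbers, together with a check of the seed values $c_1,c_2$. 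Once the closed forms for $b_n$ and $c_n$ are in hand, convergence on $|\varphi|<1$ is immediate, since $|b_n|\sim k!/n^k$ and $|c_n|=O(\log n/n^k)$.

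Finally, I would establish linear independence by the logarithmic filtration: each $\varpi^{k,1}_i$ is a polynomial in $\log\varphi$ with holomorphic coefficients whose top degree in $\log\varphi$ is exactly $i$ with leading coefficient $1$. A vanishing combination $\sum_i\lambda_i\varpi^{k,1}_i=0$, read off in descending log-degree, forces $\lambda_{k+1}=0$, then $\lambda_k=0$, and so on down to $\lambda_0=0$. Hence the $k+2$ functions are independent; being solutions of an order-$(k+2)$ equation they span the solution space, and as the Frobenius solutions attached to the maximally unipotent point $\varphi=0$ they are the canonical basis asserted by the proposition.
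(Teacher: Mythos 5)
Your proposal is correct, and its computational core coincides with what the paper does: substitute the power-series/logarithmic ans\"atze into $\mathscr{D}_{k,1}$, read off the recursions for $b_n$ and $c_n$ with their seed values, and check that $b_n=k!(-1)^n/n^k$ and $c_n=(k+1)!(-1)^n(-k+nH_{n,1})/n^{k+1}$ satisfy them (the latter check indeed reduces, after dividing by $(k+1)!(-1)^{n+1}$ and using $H_{n\pm1,1}=H_{n,1}\pm\tfrac{1}{n\pm 0}$ appropriately, to the vanishing of both the $H_{n,1}$-coefficient $(n+1)-(2n+1)+n$ and the constant part $-1+\tfrac{2n+1}{n}-\tfrac{n}{n-1}+\tfrac{1}{n(n-1)}$). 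What you add beyond the paper is genuinely useful. First, the factorization $\mathscr{D}_{k,1}=\mathscr{L}\,\vartheta^k$ with $\mathscr{L}=(1+\varphi)^2\vartheta^2+3\varphi(1+\varphi)\vartheta+\varphi(1+2\varphi)$ is valid (all coefficients sit to the left of the $\vartheta$-powers), $\mathscr{L}$ does annihilate $(1+\varphi)^{-1}$, and $\vartheta^k\varpi^{k,1}_k=k!(1+\varphi)^{-1}$; this gives a one-line conceptual verification for $\varpi^{k,1}_k$ and explains a priori why exactly two solutions beyond $\ker\vartheta^k$ exist and why they carry $\log^k$ and $\log^{k+1}$ leading terms. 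The paper never records this structure. Second, the paper asserts the proposition without ever arguing linear independence or convergence; your log-degree filtration argument and the estimates $|b_n|\sim k!/n^k$, $|c_n|=O(\log n/n^k)$ close those gaps. So your write-up is a strictly more complete proof along the same computational backbone, with the operator factorization as a bonus organizing principle the paper lacks.
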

Since the harmonic number $H_{n,1}$ satisfy
\begin{equation}
H_{n,1}=H_{n-1,1}+\frac{1}{n},~H_{0,1}=0,
\end{equation}
the solution $\varpi^{k,1}_{k+1}$ can also be rewritten as
\begin{equation}
\begin{aligned}
\varpi^{k,1}_{k+1}=&\log^{k+1} \varphi+(k+1)\left( \sum_{n=1}^\infty k! \frac{(-1)^n}{n^k} \varphi^{n} \right) \log \varphi \\
&+(1-k)(k+1)!\sum_{n=1}^\infty \frac{(-1)^n}{n^{k+1}} \varphi^n-(k+1)! \sum_{n=1}^\infty \frac{(-1)^nH_{n,1}}{(n+1)^k} \varphi^{n+1}.
\end{aligned}
\end{equation}
But $\Pi_{k,1}$ is also a solution to the Picard-Fuchs operator $\mathscr{D}_{k,1}$, so there exists $k+2$ complex numbers $\{\tau^{k,1}_i \}_{i=0}^{k+1}$ such that
\begin{equation}  \label{eq:zeta21linearequation}
\Pi_{k,1}=\sum_{i=0}^{k+1} \tau^{k,1}_i \,\varpi^{k,1}_i,~\tau^{k,1}_i \in \mathbb{C}.
\end{equation}
The complex number $\tau^{k,1}_i$ can be computed explicitly by a similar method as in Section \ref{sec:introductoryexamplepi}. In their computations, we will need to evaluate the following infinite sum
\begin{equation}
S(m,k_1,k_2)=\sum_{n=1}^\infty \frac{1}{n^{k_1}} \frac{1}{(n+m)^{k_2}}, ~k_1,k_2,m \in \mathbb{Z}_+,
\end{equation}
which can be done recursively. More precisely, we have the following recursion relation 
\begin{equation}
\begin{aligned}
S(m,k_1,k_2)&=\sum_{n=1}^\infty \frac{1}{n^{k_1-1}} \frac{1}{(n+m)^{k_2-1}} \cdot \frac{1}{m} \left(\frac{1}{n}-\frac{1}{n+m} \right)\\
&=\frac{1}{m} \left(S(m,k_1,k_2-1)-S(m,k_1-1,k_2) \right).
\end{aligned}
\end{equation}
We can use this formula repeatedly until we arrive at three `final' cases that can be evaluated immediately
\begin{equation}
\begin{aligned}
S(m,l_1,0)&=\zeta(l_1),~l_1 \geq 2, \\
S(m,0,l_2)&=\zeta(l_2)-H_{m,l_2},~l_2 \geq 2,\\
S(m,1,1)&=\frac{1}{m}\,H_{m,1}.
\end{aligned}
\end{equation}
Let us now look at the case where $k=2$. We will show $\zeta(2,1)=\zeta(3)$, which is first proved by Euler.

\subsection{\texorpdfstring{$\bm{\zeta(2,1)}$}{z(2,1)}} \label{sec:sectionzeta21}

We first show how to compute the complex numbers $\tau^{2,1}_i$. The power series $\Pi_{2,1}$ converges absolutely on the unit circle
\begin{equation}
S^1=\{|\phi|=1 \}=\{|\varphi|=1 \},
\end{equation}
thus it defines a continuous function on it. Similarly the power series that appear in $\varpi^{2,1}_2$ and $\varpi^{2,1}_3$, i.e.
\begin{equation}
\sum_{n=1}^\infty 2 \frac{(-1)^n}{n^2}\, \varphi^n~\text{and}~\sum_{n=1}^\infty \frac{6(-1)^n(-2+n H_{n,1})}{n^3} \varphi^n,
\end{equation}
also converge absolutely on $S^1$. The unit cycle $S^1$ is parameterized in the same way as in Section \ref{sec:introductoryexamplepi}, i.e. the formula \ref{eq:parameterofvarphi}, while the function on $S^1$ defined by $\log \varphi$ is given by formula \ref{eq:fndefinedbylogvarphi}. Take the integration of the LHS and RHS of formula \ref{eq:zeta21linearequation} over $S^1$ we obtain
\begin{equation} \label{eq:zeta21phi0}
\int_{-1/2}^{1/2} \Pi_{2,1} dt=\sum_{i=0}^3 \tau^{2,1}_i \int_{-1/2}^{1/2}  \varpi^{2,1}_i dt.
\end{equation}
The integrals in this formula can be easily evaluated 
\begin{equation}
\int_{-1/2}^{1/2} \Pi_{2,1} dt=0,~ \int_{-1/2}^{1/2}  \varpi^{2,1}_0 dt=1,~\int_{-1/2}^{1/2}  \varpi^{2,1}_1 dt=0,~ \int_{-1/2}^{1/2}  \varpi^{2,1}_2 dt =-\frac{1}{3} \pi^2.
\end{equation}
Furthermore, the following integrals will be needed in this section
\begin{equation}
\begin{aligned}
\int_{-1/2}^{1/2} \varphi^n \log \varphi dt&=\frac{(-1)^n}{n},~ n\in \mathbb{Z}_+,\\
\int_{-1/2}^{1/2} \varphi^n \log^2 \varphi dt&=\frac{2(-1)^{n+1}}{n^2},~ n\in \mathbb{Z}_+,\\
\int_{-1/2}^{1/2} \varphi^n \log^3 \varphi dt&=\frac{(-1)^{n}(6-n^2\pi^2)}{n^3},~ n\in \mathbb{Z}_+.
\end{aligned}
\end{equation} 
From them we have
\begin{equation}
\int_{-1/2}^{1/2}  \varpi^{2,1}_3 dt=(2+1)\left( \sum_{n=1}^\infty 2! \frac{(-1)^n}{n^2} \right) \int_{-1/2}^{1/2}  \varphi^{n}\log \varphi dt=6 \sum_{n=1}^\infty \frac{1}{n^3}=6 \zeta(3),
\end{equation}
hence equation \ref{eq:zeta21phi0} becomes
\begin{equation}
0=\tau^{2,1}_0-\frac{1}{3}\pi^2 \tau^{2,1}_2+6 \zeta(3) \tau^{2,1}_3.
\end{equation}
Now multiply both sides of equation \ref{eq:zeta21linearequation} by $\varphi$
\begin{equation}
\int_{-1/2}^{1/2} \phi^{-1} \Pi_{2,1} dt=\sum_{i=0}^3 \tau^{2,1}_i \int_{-1/2}^{1/2} \varphi \varpi^{2,1}_i dt.
\end{equation}
The integrals in this equation are given by
\begin{equation}
\int_{-1/2}^{1/2} \phi^{-1} \Pi_{2,1} dt=0,~ \int_{-1/2}^{1/2} \varphi  \varpi^{2,1}_0 dt=0,~\int_{-1/2}^{1/2} \varphi  \varpi^{2,1}_1 dt=-1,~ \int_{-1/2}^{1/2} \varphi  \varpi^{2,1}_2 dt =2.
\end{equation}
While the integral of $\varphi \varpi^{2,1}_3$ over $S^1$ is given by
\begin{equation}
\int_{-1/2}^{1/2} \varphi  \varpi^{2,1}_3 dt=-(6-\pi^2)-6 \sum_{n=1}^\infty \frac{1}{n^2(n+1)}.
\end{equation}
The sum of the infinite series can be evaluated by
\begin{equation}
\sum_{n=1}^\infty \frac{1}{n^2(n+1)} =\sum_{n=1}^\infty \frac{1}{n}\left(\frac{1}{n}-\frac{1}{n+1} \right) =\zeta(2)-1,
\end{equation}
hence we obtain
\begin{equation}
\int_{-1/2}^{1/2} \varphi  \varpi^{2,1}_3 dt=0,
\end{equation}
which implies
\begin{equation}
-\tau^{2,1}_1+2 \tau^{2,1}_2=0.
\end{equation}
Now multiply both sides of equation \ref{eq:zeta21linearequation} by $\varphi^2$, and its integration over $S^1$ yields
\begin{equation}
-\frac{1}{4}=\frac{1}{2} \tau^{2,1}_1-\frac{1}{2} \tau^{2,1}_2-\frac{3}{2} \tau^{2,1}_3.
\end{equation} 
In order to evaluate the integral of $\varphi^2 \varpi^{2,1}_3$ over $S^1$, we have used the identity
\begin{equation}
\sum_{n=1}^\infty \frac{1}{n^2(n+2)}=\frac{1}{2} \sum_{n=1}^\infty \frac{1}{n}\left( \frac{1}{n}- \frac{1}{n+2} \right)=\frac{1}{2}\zeta(2)-\frac{3}{8}.
\end{equation}
Similarly, if we multiply both sides of equation \ref{eq:zeta21linearequation} by $\varphi^3$, and take its integration over $S^1$, we obtain
\begin{equation}
\frac{1}{6}=-\frac{1}{3} \tau^{2,1}_1+\frac{2}{9} \tau^{2,1}_2+ \tau^{2,1}_3.
\end{equation} 
In order to evaluate the integral of $\varphi^3 \varpi^{2,1}_3$ over $S^1$, we have used the identity
\begin{equation}
\sum_{n=1}^\infty \frac{1}{n^2(n+3)}=\frac{1}{3} \sum_{n=1}^\infty \frac{1}{n}\left( \frac{1}{n}- \frac{1}{n+3} \right)=\frac{1}{3}\zeta(2)-\frac{11}{54}.
\end{equation}
The solution to the four linear equations is
\begin{equation}
\tau^{2,1}_0=-\zeta(3),~\tau^{2,1}_1=0,~\tau^{2,1}_2=0,~\tau^{2,1}_3=\frac{1}{6},
\end{equation}
i.e. we have
\begin{equation} \label{eq:linearrelationkequals2}
\Pi_{2,1}(\phi)=-\zeta(3) \varpi^{2,1}_0(\varphi)+\frac{1}{6} \varpi^{2,1}_3(\varphi).
\end{equation}
\begin{remark}
The readers are referred to the paper \cite{KimYang} for more similarities between the formula \ref{eq:linearrelationkequals2} and the mirror symmetry of Calabi-Yau threefolds.
\end{remark}

Now we are ready to prove the following lemma.
\begin{lemma}
\begin{equation}
\zeta(2,1)=\sum_{n=1}^\infty \frac{H_{n,1}}{(n+1)^k}=\zeta(3),~\sum_{n=1}^\infty \frac{(-1)^nH_{n,1}}{(n+1)^k}=-\frac{1}{8} \,\zeta(3).
\end{equation}
\end{lemma}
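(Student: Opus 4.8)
The plan is to read off both identities directly from the linear relation \ref{eq:linearrelationkequals2}, namely $\Pi_{2,1}=-\zeta(3)\,\varpi^{2,1}_0+\tfrac16\,\varpi^{2,1}_3$, by evaluating it at the two boundary points $\phi=1$ and $\phi=-1$. Since $\Pi_{2,1}$ and the power series occurring in $\varpi^{2,1}_2,\varpi^{2,1}_3$ all converge absolutely on $S^1$, the identity \ref{eq:linearrelationkequals2} extends by continuity from the open disc to the closed unit circle, so I may legitimately substitute $\phi=\pm1$ (equivalently $\varphi=\pm1$, since $\varphi=1/\phi$). I would work with the rewritten form of $\varpi^{2,1}_{k+1}$, which for $k=2$ reads
\begin{equation*}
\varpi^{2,1}_3=\log^3\varphi+6\left(\sum_{n=1}^\infty \frac{(-1)^n}{n^2}\varphi^n\right)\log\varphi-6\sum_{n=1}^\infty \frac{(-1)^n}{n^3}\varphi^n-6\sum_{n=1}^\infty \frac{(-1)^nH_{n,1}}{(n+1)^2}\varphi^{n+1}.
\end{equation*}
The decisive structural point is that at $\varphi=\pm1$ one has $\varphi=\phi$, so the final sum is exactly $\Pi_{2,1}$ evaluated at the very same point; each substitution therefore collapses to a self-consistent linear equation in a single unknown rather than requiring an independent evaluation of an alternating Euler sum.

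For the second identity I would set $\phi=1$, so $\varphi=1$ and, by the prescribed branch \ref{eq:fndefinedbylogvarphi} with $t=0$, $\log\varphi=0$. The first two terms of $\varpi^{2,1}_3$ then vanish, and using the elementary value $\sum_{n\ge1}(-1)^n/n^3=-\tfrac34\zeta(3)$ I obtain $\varpi^{2,1}_3(1)=\tfrac92\zeta(3)-6\,\Pi_{2,1}(1)$. Plugging this into $\Pi_{2,1}(1)=-\zeta(3)+\tfrac16\varpi^{2,1}_3(1)$ and solving for $\Pi_{2,1}(1)$ gives $\Pi_{2,1}(1)=-\tfrac18\zeta(3)$, which is precisely $\sum_{n\ge1}(-1)^nH_{n,1}/(n+1)^2=-\tfrac18\zeta(3)$.

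For Euler's identity $\zeta(2,1)=\zeta(3)$ I would set $\phi=-1$, so $\varphi=-1$ and, by \ref{eq:fndefinedbylogvarphi} with $t=\tfrac12$, $\log\varphi=\pi i$. Now the signs $(-1)^n$ in the two surviving sums combine with $\varphi^n=(-1)^n$ to produce the ordinary zeta values $\zeta(2)$ and $\zeta(3)$, while the last sum reproduces $\Pi_{2,1}(-1)=-\zeta(2,1)$; collecting terms yields $\varpi^{2,1}_3(-1)=(\pi i)^3+6\zeta(2)\,\pi i-6\zeta(3)-6\,\Pi_{2,1}(-1)$. Here the heart of the matter is that $\zeta(2)=\pi^2/6$ forces the two purely imaginary contributions $(\pi i)^3$ and $6\zeta(2)\pi i$ to cancel exactly, leaving the real relation $\varpi^{2,1}_3(-1)=-6\zeta(3)-6\,\Pi_{2,1}(-1)$. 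Feeding this into \ref{eq:linearrelationkequals2} gives $\Pi_{2,1}(-1)=-\zeta(3)$, and since $\Pi_{2,1}(-1)=-\zeta(2,1)$ this is exactly $\zeta(2,1)=\zeta(3)$.

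The only genuinely delicate step, and the one I would check most carefully, is the legitimacy of evaluating \ref{eq:linearrelationkequals2} at the boundary together with the correct determination of the multivalued $\log\varphi$ there: one must use the single-valued branch \ref{eq:fndefinedbylogvarphi} consistently, so that $\log\varphi=0$ at $\varphi=1$ and $\log\varphi=\pi i$ at $\varphi=-1$, since any other branch would spoil the cancellation of the imaginary parts at $\phi=-1$. Everything else reduces to short manipulations of absolutely convergent series, and the self-referential appearance of $\Pi_{2,1}$ inside $\varpi^{2,1}_3$ is what turns each evaluation into a one-line linear solve.
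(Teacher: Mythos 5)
Your proposal is correct and follows essentially the same route as the paper: evaluate the relation \ref{eq:linearrelationkequals2} at $\phi=\pm 1$ (justified by absolute convergence on $S^1$ and the branch \ref{eq:fndefinedbylogvarphi}), exploit the self-referential appearance of $\Pi_{2,1}$ inside $\varpi^{2,1}_3$ to reduce each substitution to a one-variable linear equation, and use $\sum_{n\ge 1}(-1)^n/n^3=-\tfrac34\zeta(3)$. The only cosmetic difference is that you make the cancellation $(\pi i)^3+6\zeta(2)\pi i=0$ explicit, whereas the paper works with the unrewritten coefficients $c_n$ and absorbs the log terms silently; the computations and conclusions agree.
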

\begin{proof}
In the formula \ref{eq:linearrelationkequals2}, let $\phi=-1$ ($\varphi=-1$) and we obtain
\begin{equation} \label{eq:zeta21equationminus1}
\Pi_{2,1}(-1)=-\zeta(3)+\frac{1}{6}\varpi^{2,1}_3(-1).
\end{equation}
The value of $\Pi_{2,1}$ at $\phi=-1$ is 
\begin{equation}
\Pi_{2,1}(-1)=\sum_{n=1}^\infty \frac{(-1)^n H_{n,1}}{(n+1)^2} (-1)^{n+1}=-\zeta(2,1),
\end{equation}
while the value of $\varpi^{2,1}_3$ at $\varphi=-1$ is given by
\begin{equation}
\varpi^{2,1}_3(-1)=6 \sum_{n=1}^\infty \frac{-2+nH_{n,1}}{n^3}=6 \sum_{n=1}^\infty \frac{-1+nH_{n-1,1}}{n^3}=6\left( -\zeta(3)+\zeta(2,1) \right).
\end{equation}
Plug the values of $\Pi_{2,1}(-1)$ and $\varpi^{2,1}_3(-1)$ into the equation \ref{eq:zeta21equationminus1} we get
\begin{equation}
\zeta(2,1)=\zeta(3).
\end{equation}
Similarly, let $\phi=1$ ($\varphi=1$) in the formula \ref{eq:linearrelationkequals2}, and we obtain
\begin{equation} \label{eq:zeta21equation1}
\Pi_{2,1}(1)=-\zeta(3)+\frac{1}{6}\varpi^{2,1}_3(1).
\end{equation}
The value of $\Pi_{2,1}$ at $\phi=1$ is 
\begin{equation}
\Pi_{2,1}(1)=\sum_{n=1}^\infty \frac{(-1)^n H_{n,1}}{(n+1)^2},
\end{equation}
while the value of $\varpi^{2,1}_3$ at $\varphi=1$ is given by
\begin{equation}
\varpi^{2,1}_3(1)=6 \sum_{n=1}^\infty \frac{(-1)^n(-2+nH_{n,1})}{n^3}=-6 \sum_{n=1}^\infty \frac{(-1)^n}{n^3}-6 \sum_{n=1}^\infty \frac{(-1)^nH_{n,1}}{(n+1)^2}
\end{equation}
The sum $\sum_{n=1}^\infty (-1)^n/n^3$ can be evaluated in an elementary way
\begin{equation}
\sum_{n=1}^\infty \frac{(-1)^n}{n^3}+\sum_{n=1}^\infty \frac{1}{n^3}=2 \sum_{n=1}^{\infty} \frac{1}{(2n)^3}=\frac{1}{4} \zeta(3),
\end{equation}
hence we deduce 
\begin{equation}
\sum_{n=1}^\infty \frac{(-1)^n}{n^3}=-\frac{3}{4} \zeta(3).
\end{equation}
Therefore we have 
\begin{equation}
\sum_{n=1}^\infty \frac{(-1)^n H_{n,1}}{(n+1)^2}=-\frac{1}{8} \zeta(3).
\end{equation}
\end{proof}

\subsection{\texorpdfstring{$\bm{\zeta(3,1)}$}{z(3,1)}}

We now look at the case where $k=3$. The value of $\tau^{3,1}_i$ can be computed by the same method as in Section \ref{sec:sectionzeta21}. Namely we look at the equations given by
\begin{equation}
\int_{-1/2}^{1/2} \phi^{-n} \Pi_{3,1} dt=\sum_{i=0}^4 \tau^{3,1}_i \int_{-1/2}^{1/2} \varphi^n \varpi^{3,1}_i dt,
\end{equation}
for various $n$, e.g. $n=0,1,2,3,4$, and then we solve these linear equations. In order to evaluate the integral $\int_{-1/2}^{1/2} \varphi^n \varpi^{3,1}_i dt$, we will further need
\begin{equation}
\int_{-1/2}^{1/2} \varphi^n \log^4 \varphi dt=4 \frac{(-1)^{n}(-6+n^2\pi^2)}{n^4},~ n\in \mathbb{Z}_+.
\end{equation}
The result is
\begin{equation}
\tau^{3,1}_0=\frac{7}{4} \zeta(4),~\tau^{3,1}_1=\zeta(3),~\tau^{3,1}_2=\tau^{3,1}_3=0,~\tau^{3,1}_4=-\frac{1}{4!}.
\end{equation}
However, if we let $\phi=1$ ($\varphi=1$), the equation
\begin{equation}
\Pi_{3,1}(1)=\sum_{i=0}^4 \tau^{3,1}_i \varpi^{3,1}_i(1)
\end{equation}
becomes the trivial one $0=0$. Similarly, the equation
\begin{equation}
\Pi_{3,1}(-1)=\sum_{i=0}^4 \tau^{3,1}_i \varpi^{3,1}_i(-1)
\end{equation}
also yields $0=0$. Thus the value of $\zeta(3,1)$ can not be determined by this method.

\subsection{\texorpdfstring{$\bm{\zeta(4,1)}$}{z(4,1)}}

When $k=4$, in order to evaluate $\int_{-1/2}^{1/2} \varphi^n \varpi^{4,1}_i dt$, we will further need the integral
\begin{equation}
\int_{-1/2}^{1/2} \varphi^n \log^5 \varphi dt= \frac{(-1)^{n}(120-20 n^2\pi^2+n^4 \pi^4)}{n^5},~ n\in \mathbb{Z}_+.
\end{equation}
Use the same method as in Section \ref{sec:sectionzeta21}, we obtain
\begin{equation}
\tau^{4,1}_0=-\zeta(5)-\zeta(2)\zeta(3),~\tau^{4,1}_1=-\frac{7}{4} \zeta(4),~\tau^{4,1}_2=-\frac{1}{2}\zeta(3),~\tau^{4,1}_3=\tau^{4,1}_4=0,~\tau^{4,1}_5=\frac{1}{5!}.
\end{equation}
Now let $\phi=1$ ($\varphi=1$) and $\phi=-1$ ($\varphi=-1$) respectively, the equations
\begin{equation}
\Pi_{4,1}(1)=\sum_{i=0}^5 \tau^{4,1}_i \varpi^{4,1}_i(1)~\text{and}~\Pi_{4,1}(-1)=\sum_{i=0}^5 \tau^{4,1}_i \varpi^{4,1}_i(-1)
\end{equation}
give us the following lemma.
\begin{lemma}
\begin{equation}
\zeta(4,1)=\sum_{n=1}^\infty \frac{H_{n,1}}{(n+1)^4}=2\zeta(5)-\zeta(2)\zeta(3),~\sum_{n=1}^\infty \frac{(-1)^nH_{n,1}}{(n+1)^4}=\frac{29}{32} \zeta(5)-\frac{1}{2}\zeta(2) \zeta(3).
\end{equation}
\end{lemma}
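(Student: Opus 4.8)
The plan is to imitate the argument used for $\zeta(2,1)$ in Section~\ref{sec:sectionzeta21} verbatim, now exploiting the already-computed coefficients $\tau^{4,1}_0=-\zeta(5)-\zeta(2)\zeta(3)$, $\tau^{4,1}_1=-\tfrac{7}{4}\zeta(4)$, $\tau^{4,1}_2=-\tfrac12\zeta(3)$, $\tau^{4,1}_3=\tau^{4,1}_4=0$, and $\tau^{4,1}_5=\tfrac{1}{5!}$. Starting from the identity $\Pi_{4,1}=\sum_{i=0}^{5}\tau^{4,1}_i\,\varpi^{4,1}_i$, which holds on the unit disc and, by absolute convergence, on $S^1$, I would specialize the variable to the two boundary points $\varphi=-1$ and $\varphi=1$. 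The left-hand side produces the two target sums directly, since $\Pi_{4,1}(-1)=-\zeta(4,1)$ and $\Pi_{4,1}(1)=\sum_{n\ge1}(-1)^nH_{n,1}/(n+1)^4$, so everything reduces to evaluating each basis function $\varpi^{4,1}_i$ at $\varphi=\pm1$.

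For $\varphi=-1$ one has $\log\varphi=\pi i$, whence $\varpi^{4,1}_i(-1)=(\pi i)^i$ for $i\le 3$, while $\varpi^{4,1}_4(-1)=\pi^4+24\zeta(4)$ and, crucially, $\varpi^{4,1}_5(-1)=\pi^5 i+120\,\zeta(4)\,\pi i-360\,\zeta(5)+120\,\zeta(4,1)$; the last term arises because the tail $-5!\sum(-1)^nH_{n,1}\varphi^{n+1}/(n+1)^4$ of $\varpi^{4,1}_5$ reproduces $\zeta(4,1)$ at $\varphi=-1$. Substituting the $\tau$-values, the purely imaginary contributions cancel after using $\zeta(4)=\pi^4/90$, and the real part collapses, via $\pi^2=6\zeta(2)$, to a single linear equation in $\zeta(4,1)$ whose solution is $\zeta(4,1)=2\zeta(5)-\zeta(2)\zeta(3)$.

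For $\varphi=1$ the simplification is sharper: $\log\varphi=0$ kills the contributions of $\varpi^{4,1}_1,\varpi^{4,1}_2,\varpi^{4,1}_3$ together with all $\log$-containing parts of $\varpi^{4,1}_4$ and $\varpi^{4,1}_5$. What remains are the power-series tails at $\varphi=1$, which I would express through the Dirichlet eta values $\sum_{n\ge1}(-1)^n/n^s=-(1-2^{1-s})\zeta(s)$; in particular $\varpi^{4,1}_4(1)=-21\zeta(4)$ and $\varpi^{4,1}_5(1)=\tfrac{675}{2}\zeta(5)-120\sum_{n\ge1}(-1)^nH_{n,1}/(n+1)^4$. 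As before the unknown alternating sum reappears on the right through $\varpi^{4,1}_5(1)$, so substituting the $\tau$-values yields a single linear equation for it, whose solution is $\tfrac{29}{32}\zeta(5)-\tfrac12\zeta(2)\zeta(3)$.

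Since the coefficients $\tau^{4,1}_i$ are taken as already known, no genuine difficulty remains and the work is entirely in the bookkeeping. The two places demanding care are (i) the correct evaluation of the alternating sums $\sum(-1)^n/n^4$ and $\sum(-1)^n/n^5$ through the eta-function identity, and (ii) recognizing that $\zeta(4,1)$, respectively its alternating analogue, occurs on both sides of the specialized identity, so that each boundary evaluation is not an explicit formula but a linear equation to be solved for the target value. The cancellation of the imaginary part at $\varphi=-1$ provides a convenient internal consistency check on both the $\tau$-values and the evaluation of $\varpi^{4,1}_5(-1)$.
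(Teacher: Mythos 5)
Your proposal is correct and follows exactly the route the paper intends: specialize the expansion $\Pi_{4,1}=\sum_i\tau^{4,1}_i\varpi^{4,1}_i$ to $\varphi=\pm1$, observe that $\zeta(4,1)$ (resp.\ its alternating analogue) reappears through the tail of $\varpi^{4,1}_5$, and solve the resulting linear equation; all of your intermediate evaluations ($\varpi^{4,1}_4(-1)=\pi^4+24\zeta(4)$, $\varpi^{4,1}_5(-1)=\pi^5i+120\zeta(4)\pi i-360\zeta(5)+120\zeta(4,1)$, $\varpi^{4,1}_4(1)=-21\zeta(4)$, $\varpi^{4,1}_5(1)=\tfrac{675}{2}\zeta(5)-120S$) check out, as does the cancellation of the imaginary part via $\zeta(4)=\pi^4/90$. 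This is the same argument the paper carries out in detail for $\zeta(2,1)$ and merely asserts for $k=4$.
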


\subsection{\texorpdfstring{$\bm{\zeta(5,1)}$}{z(5,1)}}

When $k=5$, in order to evaluate $\int_{-1/2}^{1/2} \varphi^n \varpi^{5,1}_i dt$, we will further need the integral
\begin{equation}
\int_{-1/2}^{1/2} \varphi^n \log^6 \varphi dt= -\frac{6 (-1)^{n}(120-20 n^2\pi^2+n^4 \pi^4)}{n^6},~ n\in \mathbb{Z}_+.
\end{equation}
Use the same method as in Section \ref{sec:sectionzeta21}, we obtain
\begin{equation}
\begin{aligned}
&\tau^{5,1}_0=\frac{31}{8} \zeta(6),~\tau^{5,1}_1=\zeta(5)+\zeta(2)\zeta(3),~\tau^{5,1}_2=\frac{7}{8} \zeta(4),\\
&\tau^{5,1}_3=\frac{1}{6}\zeta(3),~\tau^{5,1}_4=\tau^{5,1}_5=0,~\tau^{5,1}_6=-\frac{1}{6!}.
\end{aligned}
\end{equation}
Now let $\phi=1$ ($\varphi=1$) and $\phi=-1$ ($\varphi=-1$) respectively, the equations
\begin{equation}
\Pi_{5,1}(1)=\sum_{i=0}^6 \tau^{5,1}_i \varpi^{5,1}_i(1)~\text{and}~\Pi_{5,1}(-1)=\sum_{i=0}^6 \tau^{5,1}_i \varpi^{5,1}_i(-1)
\end{equation}
give us the trivial identity $0=0$. 

\subsection{\texorpdfstring{$\bm{\zeta(6,1)}$}{z(6,1)}}

When $k=6$, in order to evaluate $\int_{-1/2}^{1/2} \varphi^n \varpi^{6,1}_i dt$, we will further need the integral
\begin{equation}
\int_{-1/2}^{1/2} \varphi^n \log^7 \varphi dt= \frac{ (-1)^{n}(5040-840 n^2\pi^2+42 n^4 \pi^4-n^6 \pi^6)}{n^7},~ n\in \mathbb{Z}_+.
\end{equation}
Use the same method as in Section \ref{sec:sectionzeta21}, we obtain
\begin{equation}
\begin{aligned}
&\tau^{6,1}_0=-\zeta(7)-\zeta(2)\zeta(5)-\frac{7}{4}\zeta(3) \zeta(4),~\tau^{6,1}_1=-\frac{31}{8} \zeta(6),~\tau^{6,1}_2=-\frac{1}{2}(\zeta(5)+\zeta(2)\zeta(3)),\\
&\tau^{6,1}_3=-\frac{7}{24} \zeta(4),~\tau^{6,1}_4=-\frac{1}{24}\zeta(3),~\tau^{6,1}_5=\tau^{6,1}_6=0,~\tau^{6,1}_7=\frac{1}{7!}.
\end{aligned}
\end{equation}
Now let $\phi=1$ ($\varphi=1$) and $\phi=-1$ ($\varphi=-1$) respectively, the equations
\begin{equation}
\Pi_{6,1}(1)=\sum_{i=0}^6 \tau^{6,1}_i \varpi^{6,1}_i(1)~\text{and}~\Pi_{6,1}(-1)=\sum_{i=0}^6 \tau^{6,1}_i \varpi^{6,1}_i(-1)
\end{equation}
give us the following lemma.
\begin{lemma}
\begin{equation}
\begin{aligned}
&\zeta(6,1)=\sum_{n=1}^\infty \frac{H_{n,1}}{(n+1)^6}=3\zeta(7)-\zeta(2)\zeta(5)-\zeta(3)\zeta(4), \\
&\sum_{n=1}^\infty \frac{(-1)^nH_{n,1}}{(n+1)^6}=\frac{251}{128} \zeta(7)-\frac{1}{2}\zeta(2) \zeta(5)-\frac{7}{8} \zeta(3)\zeta(4).
\end{aligned}
\end{equation}
\end{lemma}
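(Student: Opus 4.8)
The plan is to obtain both identities by specializing the linear decomposition $\Pi_{6,1}=\sum_{i=0}^{7}\tau^{6,1}_i\,\varpi^{6,1}_i$, whose coefficients $\tau^{6,1}_i$ have already been computed in this subsection, at the two boundary points $\phi=1$ and $\phi=-1$ of the unit circle. Since $\Pi_{6,1}(\phi)=\sum_{n\ge 1}\frac{(-1)^nH_{n,1}}{(n+1)^6}\phi^{n+1}$, one has $\Pi_{6,1}(-1)=-\zeta(6,1)$ and $\Pi_{6,1}(1)=\sum_{n\ge1}\frac{(-1)^nH_{n,1}}{(n+1)^6}$, so the two specializations target exactly the two sums in the statement. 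Throughout I would use the rewritten form of $\varpi^{6,1}_7$, in which the last term is $-7!\sum_{n\ge1}\frac{(-1)^nH_{n,1}}{(n+1)^6}\varphi^{n+1}$; this is what makes the evaluations self-referential, and combined with $\tau^{6,1}_7=1/7!$ it lets the target sum reappear on the right-hand side.

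For the alternating sum I would set $\varphi=1$. Under the parameterization $\varphi=\exp(2\pi i t)$ this is $t=0$, so $\log\varphi=0$ and every basis element $\varpi^{6,1}_i$ with $1\le i\le 5$ vanishes, as does the $\log$-factor term inside $\varpi^{6,1}_7$; moreover $\tau^{6,1}_5=\tau^{6,1}_6=0$. Hence only $i=0$ and $i=7$ survive, and the identity collapses to $2A=\tau^{6,1}_0-5\sum_{n\ge1}(-1)^n/n^7$, where $A$ denotes the alternating sum. Inserting $\tau^{6,1}_0=-\zeta(7)-\zeta(2)\zeta(5)-\tfrac{7}{4}\zeta(3)\zeta(4)$ and the elementary value $\sum_{n\ge1}(-1)^n/n^7=-(1-2^{-6})\zeta(7)=-\tfrac{63}{64}\zeta(7)$, proved exactly as in Section \ref{sec:sectionzeta21}, yields $A=\tfrac{251}{128}\zeta(7)-\tfrac12\zeta(2)\zeta(5)-\tfrac78\zeta(3)\zeta(4)$.

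For $\zeta(6,1)$ I would set $\varphi=-1$, i.e.\ $t=\tfrac12$, so that $\log\varphi=\pi i$ and $\varpi^{6,1}_i(-1)=(\pi i)^i$ for $0\le i\le 5$. The surviving coefficients are $\tau^{6,1}_0,\dots,\tau^{6,1}_4$ together with $\tau^{6,1}_7$, and the self-referential term again contributes $+\zeta(6,1)$, producing an equation for $-2\zeta(6,1)$. The delicate point, and the step I expect to be the main obstacle, is that the odd powers $(\pi i)^1,(\pi i)^3$ from $\tau^{6,1}_1,\tau^{6,1}_3$, the term $7\cdot 6!\,\zeta(6)\,\pi i$ from the $\log$-factor inside $\varpi^{6,1}_7$, and the $(\pi i)^7$ from $\log^7\varphi$ all contribute purely imaginary quantities that must cancel, since the left-hand side is real. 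Verifying this cancellation requires substituting $\zeta(4)=\pi^4/90$ and $\zeta(6)=\pi^6/945$ to reduce every imaginary contribution to a common rational multiple of $\pi^7$, whose coefficients should then sum to zero. Once that check is in place, the real part, after using $\pi^2=6\zeta(2)$, $\pi^4=90\zeta(4)$ and $\zeta(2)^2=\tfrac52\zeta(4)$ to turn the surviving powers of $\pi$ into products of zeta values, gives $-2\zeta(6,1)=-6\zeta(7)+2\zeta(2)\zeta(5)+2\zeta(3)\zeta(4)$, that is, $\zeta(6,1)=3\zeta(7)-\zeta(2)\zeta(5)-\zeta(3)\zeta(4)$. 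The only genuine work beyond careful bookkeeping is this imaginary-part cancellation, which simultaneously confirms the internal consistency of the computed $\tau^{6,1}_i$ and isolates the desired real identity.
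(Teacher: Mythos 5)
Your proposal is correct and is exactly the paper's method: the paper proves this lemma by specializing $\Pi_{6,1}=\sum_{i=0}^{7}\tau^{6,1}_i\varpi^{6,1}_i$ at $\varphi=\pm1$ (as carried out in detail for $k=2$ in Section \ref{sec:sectionzeta21}), and your bookkeeping — the self-referential term from the rewritten $\varpi^{6,1}_7$, the value $\sum_{n\ge1}(-1)^n/n^7=-\tfrac{63}{64}\zeta(7)$, the cancellation of the imaginary parts at $\varphi=-1$, and the final reductions via $\pi^2=6\zeta(2)$, $\pi^4=90\zeta(4)$, $\zeta(2)^2=\tfrac52\zeta(4)$ — all checks out and reproduces both stated identities.
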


\subsection{\texorpdfstring{$\bm{\zeta(7,1)}$}{z(7,1)}}

When $k=7$, in order to evaluate $\int_{-1/2}^{1/2} \varphi^n \varpi^{7,1}_i dt$, we will further need the integral
\begin{equation}
\int_{-1/2}^{1/2} \varphi^n \log^{8} \varphi dt= \frac{8 (-1)^{n}(-5040+840 n^2\pi^2-42n^4 \pi^4+n^6\pi^6)}{n^{8}},~ n\in \mathbb{Z}_+.
\end{equation}
Use the same method as in Section \ref{sec:sectionzeta21}, we obtain
\begin{equation}
\begin{aligned}
&\tau^{7,1}_0=\frac{381}{64} \zeta(8),~\tau^{7,1}_1=\zeta(7)+\zeta(2)\zeta(5)+\frac{7}{4}\zeta(3) \zeta(4),~\tau^{7,1}_2=\frac{31}{16} \zeta(6),\\
&\tau^{9,1}_5=\frac{1}{6} ( \zeta(5)+\zeta(2)\zeta(3)), \tau^{7,1}_4=\frac{7}{96} \zeta(4),\tau^{7,1}_5=\frac{1}{120}\zeta(3),\tau^{7,1}_6=\tau^{7,1}_7=0,\tau^{7,1}_{8}=-\frac{1}{8!}.
\end{aligned}
\end{equation}
Now let $\phi=1$ ($\varphi=1$) and $\phi=-1$ ($\varphi=-1$) respectively, the equations
\begin{equation}
\Pi_{7,1}(1)=\sum_{i=0}^{8} \tau^{7,1}_i \varpi^{7,1}_i(1)~\text{and}~\Pi_{7,1}(-1)=\sum_{i=0}^{8} \tau^{7,1}_i \varpi^{7,1}_i(-1)
\end{equation}
give us the trivial identity $0=0$. 

\subsection{\texorpdfstring{$\bm{\zeta(8,1)}$}{z(8,1)}}

When $k=8$, in order to evaluate $\int_{-1/2}^{1/2} \varphi^n \varpi^{8,1}_i dt$, we will further need the integral
\begin{equation}
\int_{-1/2}^{1/2} \varphi^n \log^{9} \varphi dt= \frac{ (-1)^{n}(362\,880-60\,480 n^2\pi^2+3\,024n^4 \pi^4-72n^6\pi^6 +n^8 \pi^8)}{n^{9}},~ n\in \mathbb{Z}_+.
\end{equation}
Use the same method as in Section \ref{sec:sectionzeta21}, we obtain
\begin{equation}
\begin{aligned}
&\tau^{8,1}_0=-\zeta(9)-\zeta(2)\zeta(7)-\frac{31}{16}\zeta(3)\zeta(6)-\frac{7}{4}\zeta(4)\zeta(5),~\tau^{8,1}_1=-\frac{381}{64} \zeta(8), \\
&\tau^{8,1}_2=-\frac{1}{2}\left(\zeta(7)+\zeta(2)\zeta(5)+\frac{7}{4} \zeta(3)\zeta(4)\right),\tau^{8,1}_3=-\frac{31}{48} \zeta(6),\tau^{8,1}_4=-\frac{1}{24} ( \zeta(5)+\zeta(2)\zeta(3)), \\
&\tau^{8,1}_5=-\frac{7}{480} \zeta(4),~\tau^{8,1}_6=-\frac{1}{720}\zeta(3),~\tau^{8,1}_7=\tau^{8,1}_8=0,~\tau^{8,1}_{9}=\frac{1}{9!}.
\end{aligned}
\end{equation}
Now let $\phi=1$ ($\varphi=1$) and $\phi=-1$ ($\varphi=-1$) respectively, the equations
\begin{equation}
\Pi_{8,1}(1)=\sum_{i=0}^{9} \tau^{8,1}_i \varpi^{8,1}_i(1)~\text{and}~\Pi_{8,1}(-1)=\sum_{i=0}^{9} \tau^{8,1}_i \varpi^{8,1}_i(-1)
\end{equation}
give us the following lemma.
\begin{lemma}
\begin{equation}
\begin{aligned}
&\zeta(8,1)=\sum_{n=1}^\infty \frac{H_{n,1}}{(n+1)^8}=4\zeta(9)-\zeta(2)\zeta(7)-\zeta(3)\zeta(6)-\zeta(4)\zeta(5), \\
&\sum_{n=1}^\infty \frac{(-1)^nH_{n,1}}{(n+1)^8}=\frac{1529}{512} \zeta(9)-\frac{1}{2}\zeta(2) \zeta(7)-\frac{31}{32}\zeta(3)\zeta(6)-\frac{7}{8} \zeta(4)\zeta(5).
\end{aligned}
\end{equation}
\end{lemma}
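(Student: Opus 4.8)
The plan is to specialize the decomposition $\Pi_{8,1}=\sum_{i=0}^{9}\tau^{8,1}_i\,\varpi^{8,1}_i$ at the two boundary points $\varphi=-1$ (that is $\phi=-1$, parameter $t=1/2$) and $\varphi=1$ (that is $\phi=1$, parameter $t\to 0$), exactly as was done for $\zeta(2,1)$ in Section \ref{sec:sectionzeta21}. The coefficients $\tau^{8,1}_i$ have already been pinned down by Fourier analysis on $S^1$, so the entire content of the lemma is extracted by evaluating both sides of this single linear identity at $\varphi=\pm 1$.

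First I would record the left-hand values. Since $\log\varphi=\pi i$ at $\varphi=-1$ and $\log\varphi=0$ at $\varphi=1$, one finds
\begin{equation}
\Pi_{8,1}(-1)=\sum_{n\ge 1}\frac{(-1)^nH_{n,1}}{(n+1)^8}(-1)^{n+1}=-\zeta(8,1),\qquad \Pi_{8,1}(1)=\sum_{n\ge 1}\frac{(-1)^nH_{n,1}}{(n+1)^8}.
\end{equation}
Next I would evaluate the basis functions. For $j\le 7$ the pure-logarithm solutions give $\varpi^{8,1}_j(-1)=(\pi i)^j$ and $\varpi^{8,1}_j(1)=\delta_{j,0}$, while $\varpi^{8,1}_8$ contributes $8!\,\zeta(8)$ at $\varphi=-1$ and $8!\sum_{n\ge1}(-1)^n/n^8$ at $\varphi=1$.

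The decisive term is $\varpi^{8,1}_9$, for which I would use the rewritten form displayed before the statement. At $\varphi=-1$ its $H_{n,1}$-part reproduces exactly $+9!\,\zeta(8,1)$, so that $\tau^{8,1}_9\,\varpi^{8,1}_9(-1)$ contributes $\tfrac{1}{9!}\cdot 9!\,\zeta(8,1)=\zeta(8,1)$ to the right-hand side, which combines with the $-\zeta(8,1)$ on the left; collecting the two occurrences gives $-2\,\zeta(8,1)=(\text{explicit }\zeta\text{-combination})$, which one solves for $\zeta(8,1)$. Evaluating instead at $\varphi=1$, the same $H_{n,1}$-part reproduces the alternating sum $\sum_{n\ge1}(-1)^nH_{n,1}/(n+1)^8$ on both sides, and solving the resulting equation yields the second identity. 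All single alternating zeta values are removed through $\sum_{n\ge1}(-1)^n/n^{s}=(2^{1-s}-1)\zeta(s)$, and the already-proven first identity is substituted to clear $\zeta(8,1)$ from the second.

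The step I expect to be the main obstacle is the coefficient bookkeeping rather than anything structural. The even powers $(\pi i)^{2m}$ coming from $\varpi^{8,1}_j$ with $j\le 7$ are rational multiples of $\pi^{2m}=(\text{rational})\,\zeta(2m)$, and these must combine with the $\tau^{8,1}_i$—which themselves are $\mathbb{Q}$-combinations of $\zeta(8)$ and of the products $\zeta(2)\zeta(7)$, $\zeta(3)\zeta(6)$, $\zeta(4)\zeta(5)$—so that all odd powers $(\pi i)^{2m+1}$ cancel and the surviving rational coefficients come out to be precisely $4,-1,-1,-1$ in the first identity and $\tfrac{1529}{512},-\tfrac12,-\tfrac{31}{32},-\tfrac78$ in the second. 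Checking this arithmetic is the only nontrivial labor; the derivation itself is identical to the $k=2,4,6$ cases already carried out.
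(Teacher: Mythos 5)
Your proposal is correct and follows exactly the paper's route: substitute the computed $\tau^{8,1}_i$ into $\Pi_{8,1}=\sum_i\tau^{8,1}_i\varpi^{8,1}_i$ at $\varphi=-1$ and $\varphi=1$, extract $\zeta(8,1)$ (resp.\ the alternating sum) from the $H_{n,1}$-part of $\varpi^{8,1}_9$, and reduce the remaining terms to zeta values via $\pi^{2m}\in\mathbb{Q}\,\zeta(2m)$ and $\sum_n(-1)^n/n^s=(2^{1-s}-1)\zeta(s)$; I verified the arithmetic and the odd powers of $\pi i$ do cancel, yielding the stated coefficients. (Two harmless inessentials: since $\tau^{8,1}_8=0$ the value of $\varpi^{8,1}_8(\pm1)$ never enters, and the $\varphi=1$ equation does not actually contain $\zeta(8,1)$, so no substitution of the first identity into the second is needed.)
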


\subsection{\texorpdfstring{$\bm{\zeta(9,1)}$}{z(9,1)}}

When $k=9$, in order to evaluate $\int_{-1/2}^{1/2} \varphi^n \varpi^{9,1}_i dt$, we will further need the integral
\begin{equation}
\int_{-1/2}^{1/2} \varphi^n \log^{10} \varphi dt= -\frac{10 (-1)^{n}(362880-60480 n^2\pi^2+3024n^4 \pi^4-72n^6\pi^6 +n^8 \pi^8)}{n^{10}},~ n\in \mathbb{Z}_+.
\end{equation}
Use the same method as in Section \ref{sec:sectionzeta21}, we obtain
\begin{equation}
\begin{aligned}
&\tau^{9,1}_0=\frac{511}{64} \zeta(10),~\tau^{9,1}_1=\zeta(9)+\zeta(2)\zeta(7)+\frac{31}{16}\zeta(3)\zeta(6)+\frac{7}{4}\zeta(4)\zeta(5),~\tau^{9,1}_2=\frac{381}{128} \zeta(8), \\
&\tau^{9,1}_3=\frac{1}{6}\left(\zeta(7)+\zeta(2)\zeta(5)+\frac{7}{4} \zeta(3)\zeta(4)\right),~\tau^{9,1}_4=\frac{31}{192} \zeta(6),~\tau^{9,1}_5=\frac{1}{120} ( \zeta(5)+\zeta(2)\zeta(3)), \\
&\tau^{9,1}_6=\frac{7}{2880} \zeta(4),~\tau^{9,1}_7=\frac{1}{5040}\zeta(3),~\tau^{9,1}_8=\tau^{9,1}_9=0,~\tau^{9,1}_{10}=-\frac{1}{10!}.
\end{aligned}
\end{equation}
Now let $\phi=1$ ($\varphi=1$) and $\phi=-1$ ($\varphi=-1$) respectively, the equations
\begin{equation}
\Pi_{9,1}(1)=\sum_{i=0}^{10} \tau^{9,1}_i \varpi^{9,1}_i(1)~\text{and}~\Pi_{9,1}(-1)=\sum_{i=0}^{10} \tau^{9,1}_i \varpi^{9,1}_i(-1)
\end{equation}
give us the trivial identity $0=0$.

\subsection{\textbf{Generalization}}

As the readers might have noticed, for large $k$, it is practically very difficult to solve the linear equations about $\tau^{k,1}_i$ given by 
\begin{equation} \label{eq:zeta21general}
\int_{-1/2}^{1/2} \phi^{-n} \Pi_{k,1} dt=\sum_{i=0}^{k+1} \tau^{k,1}_i \,\int_{-1/2}^{1/2} \varphi^{n} \varpi^{k,1}_i dt,~\tau^{k,1}_i \in \mathbb{C}.
\end{equation}
However, there is one critical observation that will make life much easier. From our computations of $\tau^{k,1}_i$ when $k=2,3,4,5,6,7,8,9$, we have observed that
\begin{equation}
\tau^{k,1}_i=-\frac{1}{i}\tau^{k-1,1}_{i-1},~i \geq 1.
\end{equation}
On the other hand, let $n=0$ in the formula \ref{eq:zeta21general}, and we have
\begin{equation} \label{eq:tauk1zeroequation}
\tau^{k,1}_0=- \sum_{i=1}^{k+1} \tau^{k,1}_i \int_{-1/2}^{1/2}  \varpi^{k,1}_i dt,
\end{equation}
where we have used
\begin{equation}
\int_{-1/2}^{1/2}  \Pi_{k,1} dt=0~\text{and}~\int_{-1/2}^{1/2}   \varpi^{k,1}_0 dt=1.
\end{equation}
The other integrals in the formula \ref{eq:tauk1zeroequation} can also be evaluated easily
\begin{equation}
\begin{aligned}
\int_{-1/2}^{1/2}  \varpi^{k,1}_j dt&=\frac{\left(1+(-1)^j \right) (\pi i)^j}{2(1+j)},~j=1,\cdots,k;\\
\int_{-1/2}^{1/2}  \varpi^{k,1}_{k+1} dt&=\frac{\left(1+(-1)^{k+1} \right) (\pi i)^{k+1}}{2(k+2)}+(k+1)!\zeta(k+1).\\
\end{aligned}
\end{equation}

\begin{conjecture} \label{conjecturetauk1}
The complex number $\tau^{k,1}_i,i \geq 1$ is always equal to $-\tau^{k-1,1}_{i-1}/i$. Together with formula \ref{eq:tauk1zeroequation}, it gives us a very efficient algorithm to compute $\tau^{k,1}_i$.
\end{conjecture}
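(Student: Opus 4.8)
The plan is to deduce the recursion $\tau^{k,1}_i=-\tau^{k-1,1}_{i-1}/i$ (for $i\ge 1$) by applying the Euler operator $\vartheta=\phi\,d/d\phi$ to the expansion \ref{eq:zeta21linearequation} and then matching coefficients against the canonical basis one level down. The first ingredient is that $\vartheta$ sends $\Pi_{k,1}$ to $\Pi_{k-1,1}$: since $\vartheta\,\phi^{n+1}=(n+1)\phi^{n+1}$, each term of $\Pi_{k,1}$ loses one power of $(n+1)$ in its denominator, so term by term $\vartheta\,\Pi_{k,1}=\sum_{n\ge1}(-1)^nH_{n,1}(n+1)^{-(k-1)}\phi^{n+1}=\Pi_{k-1,1}$. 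Thus applying $\vartheta$ to \ref{eq:zeta21linearequation} gives $\Pi_{k-1,1}=\sum_{i=0}^{k+1}\tau^{k,1}_i\,\vartheta\,\varpi^{k,1}_i$, and the whole problem reduces to understanding how $\vartheta$ acts on the canonical basis.

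The key claim I would establish is that $\vartheta\,\varpi^{k,1}_0=0$ and $\vartheta\,\varpi^{k,1}_i=-i\,\varpi^{k-1,1}_{i-1}$ for $i=1,\dots,k+1$. Because the basis is written in the coordinate $\varphi=1/\phi$ and $\phi\,d/d\phi=-\varphi\,d/d\varphi$, this is equivalent to $\varphi\,d/d\varphi\,\varpi^{k,1}_i=i\,\varpi^{k-1,1}_{i-1}$. For $i=1,\dots,k-1$ it is immediate from $\varpi^{k,1}_i=\log^i\varphi$. For $i=k$, differentiating $\varpi^{k,1}_k=\log^k\varphi+\sum_n k!\,(-1)^n n^{-k}\varphi^n$ and using $k\cdot(k-1)!=k!$ reproduces exactly $k\,\varpi^{k-1,1}_{k-1}$.

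The main obstacle is the top case $i=k+1$. Here $\varphi\,d/d\varphi$ hits the mixed term $(k+1)\big(\sum_n k!(-1)^n n^{-k}\varphi^n\big)\log\varphi$ through the product rule, producing both the expected $\log\varphi$-linear piece of $(k+1)\varpi^{k-1,1}_k$ and an extra logarithm-free series $(k+1)\sum_n k!(-1)^n n^{-k}\varphi^n$; this residue must be absorbed into the differentiated tail $\sum_n (k+1)!(-1)^n(-k+nH_{n,1})n^{-k}\varphi^n$. The bookkeeping closes because, coefficient by coefficient, $(k+1)k!+(k+1)!\,(-k+nH_{n,1})=(k+1)!\,(-(k-1)+nH_{n,1})$, which is precisely the power-series coefficient of $(k+1)\varpi^{k-1,1}_k$ (the level-$(k-1)$ formula, obtained by replacing $k$ with $k-1$). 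Verifying this identity, together with the matching of the $\log\varphi$-linear term, is the one genuinely computational step.

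With the claim in hand the conclusion is immediate: $\Pi_{k-1,1}=-\sum_{i=1}^{k+1}i\,\tau^{k,1}_i\,\varpi^{k-1,1}_{i-1}$, and after reindexing $j=i-1$ this reads $\Pi_{k-1,1}=-\sum_{j=0}^{k}(j+1)\tau^{k,1}_{j+1}\,\varpi^{k-1,1}_{j}$. Comparing with the unique expansion $\Pi_{k-1,1}=\sum_{j=0}^{k}\tau^{k-1,1}_j\varpi^{k-1,1}_j$ in the linearly independent level-$(k-1)$ canonical basis forces $\tau^{k-1,1}_j=-(j+1)\tau^{k,1}_{j+1}$, i.e. $\tau^{k,1}_i=-\tau^{k-1,1}_{i-1}/i$ for all $i\ge1$. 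Feeding this into \ref{eq:tauk1zeroequation} to recover $\tau^{k,1}_0$, and using the directly computed values at $k=2$ as the base case, then yields the claimed recursive algorithm.
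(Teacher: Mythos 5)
The statement you were asked to prove is stated in the paper only as a \emph{conjecture}: the author supports it by explicit computation for $k=2,\dots,9$ and lists its proof as an open problem in Section \ref{sec:furtherprospects}, so there is no proof in the paper to compare against. Your argument, however, appears to be a correct and complete proof of the recursion, and I could not find a gap in it. The structural reason it works is the operator factorization $\mathscr{D}_{k,1}=\mathscr{D}_{k-1,1}\circ\vartheta$, which guarantees that $\vartheta$ maps the level-$k$ solution space into the level-$(k-1)$ one; your computation then identifies the images of the canonical basis exactly, and the one nontrivial coefficient check,
\begin{equation*}
(k+1)\,k!+(k+1)!\,(-k+nH_{n,1})=(k+1)!\bigl(-(k-1)+nH_{n,1}\bigr),
\end{equation*}
is verified correctly, as is the matching of the $\log\varphi$-linear terms via $(k+1)k!=(k+1)\cdot k\cdot(k-1)!$. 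Likewise $\vartheta\Pi_{k,1}=\Pi_{k-1,1}$ is immediate termwise, and differentiation commutes with the analytic continuation used to write $\Pi_{k,1}$ in the $|\varphi|<1$ basis, so the same branch of $\Pi_{k-1,1}$ appears on both sides. To make this fully rigorous you should add three small remarks: (i) the argument requires $k\ge 3$ so that the level-$(k-1)$ objects are the ones the paper defines (the base case $k=2$ is computed directly); (ii) the coefficient comparison uses that $\{\varpi^{k-1,1}_j\}_{j=0}^{k}$ are linearly independent, which follows from their distinct leading powers of $\log\varphi$; (iii) since $\vartheta$ annihilates $\varpi^{k,1}_0$, the recursion says nothing about $\tau^{k,1}_0$, which is exactly why formula \ref{eq:tauk1zeroequation} is still needed --- consistent with how the conjecture is phrased. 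With those points spelled out, your argument settles Conjecture \ref{conjecturetauk1} (and the identical mechanism, using $\mathscr{D}_{k,2}=\mathscr{D}_{k-1,2}\circ\vartheta$, should settle Conjecture \ref{conjecturetauk2} as well).
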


\noindent Moreover, we have the following corollary.
\begin{corollary}
The complex number $\tau^{k,1}_{k+1}$ is equal to $(-1)^{k}/(k+1)!$. When $k$ is an even integer, the equations 
\begin{equation}
\Pi_{k,1}(1)=\sum_{i=0}^{k+1} \tau^{k,1}_i \varpi^{k,1}_i(1)~\text{and}~\Pi_{k,1}(-1)=\sum_{i=0}^{k+1} \tau^{k,1}_i \varpi^{k,1}_i(-1)
\end{equation}
will give us the values of
\begin{equation}
\zeta(k,1)=\sum_{n=1}^\infty \frac{H_{n,1}}{(n+1)^k}~\text{and}~\sum_{n=1}^\infty \frac{(-1)^nH_{n,1}}{(n+1)^k}
\end{equation}
in terms of the zeta values $\zeta(2),\cdots,\zeta(k+1)$. While when $k$ is odd, these two equations give us the trivial identity $0=0$.

\end{corollary}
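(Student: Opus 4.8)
The plan is to treat the two assertions separately. For the value of $\tau^{k,1}_{k+1}$ I would argue by induction on $k$, taking the recursion $\tau^{k,1}_i=-\tau^{k-1,1}_{i-1}/i$ of Conjecture \ref{conjecturetauk1} as given. The base case $k=2$ is the explicit computation $\tau^{2,1}_3=1/6=(-1)^2/3!$ carried out in Section \ref{sec:sectionzeta21}, and the inductive step is immediate:
\[
\tau^{k,1}_{k+1}=-\frac{1}{k+1}\,\tau^{k-1,1}_k=-\frac{1}{k+1}\cdot\frac{(-1)^{k-1}}{k!}=\frac{(-1)^k}{(k+1)!}.
\]
The same recursion, now started from $\tau^{2,1}_1=\tau^{2,1}_2=0$, propagates to give $\tau^{k,1}_{k-1}=\tau^{k,1}_k=0$ for every $k$; I would record this, since it slightly simplifies the evaluations below, although it is not logically essential.

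The heart of the matter is a single structural observation about $\varpi^{k,1}_{k+1}$. In its rewritten form, the genuine power-series tail of this solution is exactly $-(k+1)!\sum_{n\ge1}\frac{(-1)^nH_{n,1}}{(n+1)^k}\varphi^{n+1}=-(k+1)!\,\Pi_{k,1}$, where we use $\varphi=1/\phi$ so that at $\varphi=\pm1$ the two series agree term by term. Consequently, when I substitute $\varphi=1$ (so $\log\varphi=0$) or $\varphi=-1$ (so $\log\varphi=\pi i$) into the decomposition $\Pi_{k,1}=\sum_i\tau^{k,1}_i\varpi^{k,1}_i$, the quantity $\Pi_{k,1}(\pm1)$ reappears on the right-hand side, carried \emph{solely} by the term $\tau^{k,1}_{k+1}\varpi^{k,1}_{k+1}(\pm1)$ with coefficient $-(k+1)!\,\tau^{k,1}_{k+1}=-(-1)^k$ (none of the other basis elements $\varpi^{k,1}_0,\dots,\varpi^{k,1}_k$ contains the $H_{n,1}$-series). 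Collecting this contribution on the left turns each of the two equations into
\[
\bigl(1+(-1)^k\bigr)\,\Pi_{k,1}(\pm1)=R_\pm,
\]
where $R_\pm$ is an explicit finite combination of the $\tau^{k,1}_i$ with $0\le i\le k-1$, the numbers $\zeta(k)$, $\zeta(k+1)$, the alternating sum $\sum(-1)^n/n^{k+1}$, and powers $(\pi i)^j$; all of these lie in $\mathbb{Q}(\pi,\zeta(2),\dots,\zeta(k+1))$.

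The dichotomy then falls out of the prefactor $1+(-1)^k$. When $k$ is even it equals $2\ne0$, so $\Pi_{k,1}(\pm1)=R_\pm/2$ determines both sums. Since $\Pi_{k,1}(1)=\sum(-1)^nH_{n,1}/(n+1)^k$ and $\Pi_{k,1}(-1)=-\zeta(k,1)$ are real, the odd powers of $\pi$ in $R_\pm$ (the imaginary contributions coming from factors $(\pi i)^{\mathrm{odd}}$) are forced to cancel, and the surviving even powers are rational multiples of $\zeta(2),\zeta(4),\dots$; this expresses both sums through products of zeta values of weight $k+1$, exactly as in the worked cases $k=2,4,6,8$. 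When $k$ is odd the prefactor is $0$, the term $\Pi_{k,1}(\pm1)$ drops out completely, and each equation degenerates to the true but content-free relation $0=R_\pm$ among zeta values, yielding no information about $\zeta(k,1)$ — the trivial $0=0$ of the statement.

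The step I expect to be the genuine crux is the identification $(k+1)!\,\tau^{k,1}_{k+1}=(-1)^k$ combined with the recognition that $\varpi^{k,1}_{k+1}$ reproduces $\Pi_{k,1}$ itself upon specialization: it is precisely this self-reference, with the exactly computed coefficient, that manufactures the factor $1+(-1)^k$ and hence the even/odd alternative. By contrast, verifying that $R_\pm$ lies in $\mathbb{Q}(\pi,\zeta(2),\dots,\zeta(k+1))$ and that its imaginary part vanishes is routine, the latter being automatic from the reality of the left-hand side.
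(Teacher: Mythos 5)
Your proof is correct and is precisely the mechanism the paper relies on but never writes out: the corollary is stated without proof, and your identification of the self-referential $H_{n,1}$-tail of $\varpi^{k,1}_{k+1}$, entering with coefficient $-(k+1)!\,\tau^{k,1}_{k+1}=(-1)^{k+1}$ and hence producing the prefactor $1+(-1)^k$, is exactly what the worked cases $k=2,\dots,9$ instantiate. The only nuance is that for odd $k$ your argument delivers the necessarily-true relation $0=R_\pm$ rather than a literal $0=0$; seeing that $R_\pm$ cancels coefficient-by-coefficient requires in addition the explicit value of $\tau^{k,1}_0$ supplied by formula \ref{eq:tauk1zeroequation}, but this does not affect the substance of the corollary.
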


\section{Double Zeta values \texorpdfstring{$ \zeta(k,2)$}{z(k,2)}} \label{sec:doublezetak2}

In this section, we will apply the method in Section \ref{sec:zetak1section} to study the double zeta values $\zeta(k,2),k\geq 2$ defined by
\begin{equation}
\zeta(k,2)=\sum_{n> m \geq 1} \frac{1}{n^k m^2}=\sum_{n=2}^\infty \frac{1}{n^k} \sum_{m=1}^{n-1} \frac{1}{m^2}=\sum_{n=1}^\infty \frac{H_{n,2}}{(n+1)^k}.
\end{equation}
Follow Section \ref{sec:zetak1section}, we construct a power series $\Pi_{k,2}$ of the form
\begin{equation}
\Pi_{k,2}:=\sum_{n=1}^\infty \frac{(-1)^n H_{n,2}}{(n+1)^k} \phi^{n+1},
\end{equation}
which converges on the unit disc $|\phi| \leq 1$, while its value at $\phi=-1$ is just $-\zeta(k,2)$. 

\begin{lemma}
The power series $\Pi_{k,2}$  is a solution to the Picard-Fuchs operator $\mathscr{D}_{(k,2)}$
\begin{equation}
\mathscr{D}_{(k,2)}:=(1+\phi)^2 \vartheta^{k+3}-4(1+\phi)\vartheta^{k+2}+(5+3\phi)\vartheta^{k+1}-(2+\phi)\vartheta^k, ~\vartheta=\phi \frac{d}{d \phi}
\end{equation}
\end{lemma}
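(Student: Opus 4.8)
The plan is to follow the proof of the corresponding lemma for $\Pi_{k,1}$ verbatim: I exhibit $\Pi_{k,2}$ as the unique holomorphic solution of $\mathscr{D}_{(k,2)}$ singled out by its leading term, and then verify that the explicit coefficients $(-1)^{n}H_{n,2}/(n+1)^k$ satisfy the resulting recursion. First I would record the indicial data at $\phi=0$. Since $\vartheta\phi^m=m\phi^m$, the part of $\mathscr{D}_{(k,2)}$ that survives at $\phi=0$ is $\vartheta^{k+3}-4\vartheta^{k+2}+5\vartheta^{k+1}-2\vartheta^{k}=\vartheta^{k}(\vartheta-1)^2(\vartheta-2)$, so the indicial roots are $0$ (with multiplicity $k$), $1$ (with multiplicity $2$), and $2$ (simple). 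As $r=2$ is the largest root and is simple, Frobenius theory guarantees a unique holomorphic solution of the shape $\phi^2\left(1+O(\phi)\right)$ with no logarithmic terms, pinned down completely once the coefficient of $\phi^2$ is fixed. Hence it suffices to check that $\Pi_{k,2}$, whose lowest term is $a_2\phi^2$ with $a_2=-2^{-k}$, is annihilated by $\mathscr{D}_{(k,2)}$.

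Next I would carry out the substitution. Writing $\Pi_{k,2}=\sum_{m\ge 2}a_m\phi^m$ with $a_m=(-1)^{m-1}H_{m-1,2}/m^k$ and using $\vartheta^j\phi^m=m^j\phi^m$, a direct expansion of the coefficients $(1+\phi)^2$, $(1+\phi)$, $(5+3\phi)$, $(2+\phi)$ shows that $\mathscr{D}_{(k,2)}\phi^m$ is supported on $\phi^m,\phi^{m+1},\phi^{m+2}$ with coefficients that factor as $m^{k}(m-1)^2(m-2)$, $m^{k}(m-1)(2m^2-2m+1)$ and $m^{k+3}$ respectively. Collecting the coefficient of $\phi^N$ in $\mathscr{D}_{(k,2)}\Pi_{k,2}$ then yields the three-term recursion
\[
N^{k}(N-1)^2(N-2)\,a_N+(N-1)^{k}(N-2)(2N^2-6N+5)\,a_{N-1}+(N-2)^{k+3}\,a_{N-2}=0,
\]
together with the initial data $a_2=-2^{-k}$ and $a_3=H_{2,2}\,3^{-k}=\tfrac54\,3^{-k}$, which one reads off from the coefficients of $\phi^2$ and $\phi^3$; note that the $\phi^2$-coefficient vanishes automatically thanks to the factor $(m-2)$.

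The remaining and only substantive step is to verify that this closed form solves the recursion. I would substitute $a_m=(-1)^{m-1}H_{m-1,2}/m^k$, at which point the powers $m^k$ cancel against the $m^k$ in each coefficient, leaving a common sign $(-1)^{N-1}$ and the harmonic numbers $H_{N-1,2},H_{N-2,2},H_{N-3,2}$. After factoring out the common $(N-2)$ and setting $M=N-1$ (so that $2N^2-6N+5=2M^2-2M+1$), the vanishing of the coefficient of $\phi^N$ reduces to the single identity $H_{M,2}M^2-H_{M-1,2}(2M^2-2M+1)+H_{M-2,2}(M-1)^2=0$. Expanding $H_{M,2}=H_{M-2,2}+(M-1)^{-2}+M^{-2}$ and $H_{M-1,2}=H_{M-2,2}+(M-1)^{-2}$ splits this into a harmonic part and a rational part: the harmonic part vanishes by the polynomial identity $M^2-(2M^2-2M+1)+(M-1)^2=0$, and the rational remainder collapses through $\tfrac{M^2-(2M^2-2M+1)}{(M-1)^2}+1=\tfrac{-(M-1)^2}{(M-1)^2}+1=0$.

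I do not expect a genuine obstacle here: the argument is a verification, and its only delicate point is the bookkeeping of the three-term recursion with harmonic-number weights. The two scalar identities displayed above are the heart of the cancellation, while the factorizations $m^3-4m^2+5m-2=(m-1)^2(m-2)$ and $2m^3-4m^2+3m-1=(m-1)(2m^2-2m+1)$ are what make the indicial and initial data consistent with the closed form, so that the unique Frobenius solution with leading term $-2^{-k}\phi^2$ is exactly $\Pi_{k,2}$.
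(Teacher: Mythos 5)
Your proposal is correct and follows essentially the same route as the paper: substitute the power series into $\mathscr{D}_{k,2}$, extract the three-term recursion (yours, after cancelling the common factor $N-2$ and shifting $N=n+1$, is exactly the paper's $(n-1)^{k+2}a_{n-1}+n^k(2n^2-2n+1)a_n+n^2(n+1)^ka_{n+1}=0$), and verify that $a_m=(-1)^{m-1}H_{m-1,2}/m^k$ satisfies it. You in fact supply more detail than the paper, which omits the harmonic-number identity $M^2H_{M,2}-(2M^2-2M+1)H_{M-1,2}+(M-1)^2H_{M-2,2}=0$ that you check explicitly.
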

\begin{proof}
The Picard-Fuchs operator $\mathscr{D}_{k,2}$ is a linear operator, and its solution space is $k+3$ dimensional. Suppose there exists a power series solution of the form
\begin{equation}
\sum_{n=2}^\infty a_n \phi^n,~\text{with}~a_2=-2^{-k}.
\end{equation}
Now we plug it into $\mathscr{D}_{k,2}$, and in order for it to be a solution, we must have
\begin{equation}
a_3=H_{2,2}\,3^{-k},~(n-1)^{k+2}a_{n-1}+n^k(2n^2-2n+1)a_n+n^2(n+1)^k a_{n+1}=0.
\end{equation}
This recursion relation can be solved explicitly, and we have
\begin{equation}
a_n=\frac{(-1)^{n-1}H_{n-1,2}}{n^k},
\end{equation}
which proves this lemma.
\end{proof}

With respect to the variable $\varphi=1/\phi$, the Picard-Fuchs operator $\mathscr{D}_{k,2}$ (after a multiplication by $(-1)^{k+3}\varphi^2$) becomes
 \begin{equation}
 \mathscr{D}_{k,2}=(1+\varphi)^2 \vartheta^{k+3}+4\varphi(1+\varphi) \vartheta^{k+2}+\varphi(3+5\varphi)\vartheta^{k+1}+\varphi(1+2 \varphi) \vartheta^k,~\vartheta=\varphi \frac{d }{d \varphi}.
 \end{equation}
Since the degree of $\mathscr{D}_{k,2}$ is $k+3$, the dimension of its solution space is $k+3$, and now we will construct a canonical basis for the solution space of $\mathscr{D}_{k,2}$ on the unit disc $|\varphi|<1$. First, from the form of $\mathscr{D}_{k,2}$, it has $k$ solutions of the form
\begin{equation}
\varpi^{k,2}_i=\log^i \varphi,~i=0,1,\cdots,k-1.
\end{equation}
We need to construct another three linearly independent solutions. First, let us try whether there exists a solution of the form
\begin{equation}
\log^k \varphi+\sum_{n=1}^\infty b_n\, \varphi^n.
\end{equation}
In order for it to be a solution of $\mathscr{D}_{k,2}$, we must have
\begin{equation}
b_1=-k!,~b_2=\frac{k!}{2^{k}},~(n-1)^k n^2 b_{n-1}+n^k(2n^2+2n+1)b_n+(n+1)^{k+2}b_{n+1}=0.
\end{equation}
This recursion equation can be solved explicitly, and we obtain
\begin{equation}
b_n=k! \frac{(-1)^n}{n^k}.
\end{equation}
Next, we try whether there exists a solution of the form
\begin{equation}
\log^{k+1} \varphi+(k+1)\left( \sum_{n=1}^\infty k! \frac{(-1)^n}{n^k} \varphi^{n} \right) \log \varphi+\sum_{n=1}^\infty c_n \varphi^n.
\end{equation}
Plug it into $\mathscr{D}_{k,2}$, we obtain a recursion equation about $c_n$
\begin{equation}
\begin{aligned}
&c_1=k(k+1)!,~c_2=-\frac{k(k+1)!}{2^{k+1}},\\
&(n+1)^{k+2}c_{n+1}+n^k(2n^2+2n+1)c_n+(n-1)^kn^2c_{n-1}+\frac{(-1)^{n+1}k (k+1)!}{n(n-1)}=0.
\end{aligned}
\end{equation}
This recursion equation can also be solved explicitly and we obtain
\begin{equation}
c_n=\frac{-k(k+1)!(-1)^n}{n^{k+1}}.
\end{equation}
Next, let us try whether there exists a solution of the form
\begin{equation}
\begin{aligned}
&\log^{k+2} \varphi+\binom{k+2}{2}\left( \sum_{n=1}^\infty k! \frac{(-1)^n}{n^k} \varphi^{n} \right) \log^2 \varphi\\
&+(k+2) \left( \sum_{n=1}^\infty (-k(k+1)!) \frac{(-1)^n}{n^{k+1}}  \varphi^n \right) \log \varphi+\sum_{n=1}^\infty d_n \varphi^n.
\end{aligned}
\end{equation}
Plug it into $\mathscr{D}_{k,2}$, we obtain
\begin{equation}
\begin{aligned}
&d_1=-\frac{1}{2}(k^2+k+2)(k+2)!,~d_2=\frac{(k^2+k+10)(k+2)!}{2^{k+3}},\\
&(n+1)^{k+2}d_{n+1}+n^k(2n^2+2n+1)d_n+(n-1)^kn^2d_{n-1} \\
&+\frac{(-1)^{n}k(k+1) (k+2)!(2n^2-1)}{2n^2(n-1)^2}=0.
\end{aligned}
\end{equation}
This recursion equation can be solved explicitly and we get
\begin{equation}
d_n=(k+2)!(-1)^n \frac{\binom{k+1}{2}+n^2 H_{n,2}}{n^{k+2}}.
\end{equation}
In conclusion, we have the following proposition.
\begin{proposition}
On the unit disc $|\varphi|<1$, the $k+3$ dimensional solution space of the Picard-Fuchs operator $\mathscr{D}_{k,2}$ has a canonical basis given by
\begin{equation}
\begin{aligned}
\varpi^{k,2}_i&=\log^i \varphi,~i=0,1,\cdots,k-1.\\
\varpi^{k,2}_{k}&=\log^k \varphi+\sum_{n=1}^\infty k! \frac{(-1)^n}{n^k}\, \varphi^n,\\
\varpi^{k,2}_{k+1}&=\log^{k+1} \varphi+(k+1)\left( \sum_{n=1}^\infty k! \frac{(-1)^n}{n^k} \varphi^{n} \right) \log \varphi+\sum_{n=1}^\infty (-k(k+1)!)\frac{(-1)^n}{n^{k+1}} \varphi^n,\\
\varpi^{k,2}_{k+2}&=\log^{k+2} \varphi+\binom{k+2}{2}\left( \sum_{n=1}^\infty k! \frac{(-1)^n}{n^k} \varphi^{n} \right) \log^2 \varphi\\
&+(k+2)\left( \sum_{n=1}^\infty (-k(k+1)!) \frac{(-1)^n}{n^{k+1}}  \varphi^n \right) \log \varphi+\sum_{n=1}^\infty (k+2)!(-1)^n \frac{\binom{k+1}{2}+n^2 H_{n,2}}{n^{k+2}} \varphi^n.
\end{aligned}
\end{equation}
\end{proposition}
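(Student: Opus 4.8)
My plan is to establish the proposition by the Frobenius method at $\varphi=0$, in direct parallel with the canonical basis already constructed for $\mathscr{D}_{k,1}$. The first step is to confirm that $\varphi=0$ is a regular singular point with maximal unipotent monodromy: applying $\mathscr{D}_{k,2}$ to a test monomial $\varphi^s$ and using $\vartheta\varphi^s=s\varphi^s$ shows that the coefficient of the leading power is $s^{k+3}$, so the indicial equation is $s^{k+3}=0$ and $s=0$ is a root whose multiplicity equals the order $k+3$ of the operator. The general theory of such points then guarantees a canonical basis $\{\varpi^{k,2}_j\}_{j=0}^{k+2}$ in which $\varpi^{k,2}_j$ has top logarithmic degree exactly $j$: it equals $\log^j\varphi$ plus terms of strictly smaller logarithmic degree whose coefficients are power series vanishing at the origin. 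This exhausts the $k+3$ dimensions, so it suffices to produce solutions of the prescribed shape.

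For the first $k$ members I would note that each monomial of $\mathscr{D}_{k,2}$ carries $\vartheta^k$ as a common left factor, and since $\vartheta^k\log^i\varphi=0$ for $0\le i\le k-1$, the pure logarithms $\varpi^{k,2}_i=\log^i\varphi$ are annihilated. For $j=k,k+1,k+2$ I would build the solutions one logarithmic degree at a time, inserting exactly the ans\"atze displayed in the statement. At each level the logarithmic part is forced by the power series already found at the previous level (the usual ``lower order feeds higher order'' phenomenon of the Frobenius construction), so the only unknowns are the holomorphic coefficients $b_n$, $c_n$, $d_n$. Substituting into $\mathscr{D}_{k,2}$ and reading off the coefficient of $\varphi^n$ produces the three-term recursions quoted in the text, together with their initial data $b_1,b_2$, $c_1,c_2$, $d_1,d_2$; here the $(1+\varphi)^2$, $\varphi(1+\varphi)$ and $\varphi(1+2\varphi)$ factors are precisely what couple three consecutive indices.

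The decisive step is to verify that the closed forms solve these recursions, which I would do by direct substitution. For $b_n=k!(-1)^n/n^k$ the homogeneous relation collapses, after cancelling $k!(-1)^n$, to the polynomial identity $-n^2+(2n^2+2n+1)-(n+1)^2=0$; the $c_n$ case is the same computation with the factorial inhomogeneity matched as a rational identity in $n$. The main obstacle is $\varpi^{k,2}_{k+2}$, whose coefficient $d_n=(k+2)!(-1)^n(\binom{k+1}{2}+n^2H_{n,2})/n^{k+2}$ carries the harmonic number $H_{n,2}$ and must absorb the more intricate inhomogeneity $\tfrac{(-1)^n k(k+1)(k+2)!(2n^2-1)}{2n^2(n-1)^2}$. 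I would handle this by splitting $d_n$ into its $H$-dependent and $H$-independent parts: using $H_{n,2}=H_{n-1,2}+1/n^2$, the $H$-dependent part telescopes and vanishes by the very identity that kills the $b_n$ recursion, while the $H$-independent part, proportional to $\binom{k+1}{2}/n^{k+2}$, must cancel the inhomogeneity, which reduces to checking that a single rational function of $n$ is identically zero. Finally, matching the initial values $b_1,b_2,c_1,c_2,d_1,d_2$ against the closed forms pins down each solution uniquely and completes the proof. I expect the $d_n$ verification, because of the coupling introduced by $H_{n,2}$, to be the only genuinely delicate computation.
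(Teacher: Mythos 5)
Your proposal is correct and follows essentially the same route as the paper: annihilate the pure logarithms via the common $\vartheta^k$ factor, posit the three logarithmic ans\"atze, derive the three-term recursions for $b_n$, $c_n$, $d_n$ with their initial data, and confirm the stated closed forms (your split of $d_n$ into the $H_{n,2}$-part, killed by the same identity $-n^2+(2n^2+2n+1)-(n+1)^2=0$, and the $\binom{k+1}{2}/n^{k+2}$-part, which exactly absorbs the inhomogeneity, does check out). The Frobenius/indicial-equation framing and the explicit verification of the recursions are slightly more detail than the paper supplies, but the argument is the same.
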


Since the harmonic number $H_{n,2}$ satisfies
\begin{equation}
H_{n,2}=H_{n-1,2}+\frac{1}{n^2},~H_{0,2}=0,
\end{equation}
the solution $\varpi^{k,2}_{k+2}$ can also be written as
\begin{equation}
\begin{aligned}
&\log^{k+2} \varphi+\binom{k+2}{2}\left( \sum_{n=1}^\infty k! \frac{(-1)^n}{n^k} \varphi^{n} \right) \log^2 \varphi+(k+2)\left( \sum_{n=1}^\infty (-k(k+1)!) \frac{(-1)^n}{n^{k+1}}  \varphi^n \right) \log \varphi\\
&+\sum_{n=1}^\infty (k+2)!(-1)^n \frac{\binom{k+1}{2}+1}{n^{k+2}} \varphi^n-\sum_{n=1}^\infty (k+2)! \frac{ (-1)^{n} H_{n,2}}{(n+1)^{k}} \varphi^{n+1}.
\end{aligned}
\end{equation}
Since $\Pi_{k,2}$ is also a solution of the Picard-Fuchs operator $\mathscr{D}_{k,2}$, there exists $k+3$ complex numbers $\{\tau^{k,2}_i \}_{i=0}^{k+2}$ such that
\begin{equation}
\Pi_{k,2}=\sum_{i=0}^{k+2} \tau^{k,2}_i \,\varpi^{k,2}_i.
\end{equation}
The complex numbers $ \tau^{k,2}_i $ can be computed by the same method as in Section \ref{sec:sectionzeta21}. More precisely, we have linear equations about $ \tau^{k,2}_i $
\begin{equation}
\int_{-1/2}^{1/2} \phi^{-n} \Pi_{k,2} dt=\sum_{i=0}^{k+2} \tau^{k,2}_i \int_{-1/2}^{1/2} \varphi^n \varpi^{k,2}_i dt,
\end{equation}
for various $n$, e.g. $n=0,1,\cdots,k+2$. Solve these linear equations we get the value of $\tau^{k,2}_i$, therefore we have the following lemma.
\begin{lemma}
The complex numbers $\tau^{k,2}_i$ lie in the field $\mathbb{Q}(\pi,\zeta(3),\cdots, \zeta(k+2))$.
\end{lemma}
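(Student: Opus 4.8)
The plan is to realize the identity $\Pi_{k,2}=\sum_{i=0}^{k+2}\tau^{k,2}_i\varpi^{k,2}_i$ as a linear system with coefficients in the field $F:=\mathbb{Q}(\pi,\zeta(3),\dots,\zeta(k+2))$ and then invoke one elementary fact: a consistent linear system whose coefficients lie in a subfield $F\subseteq\mathbb{C}$ and which has a unique complex solution already has that solution in $F$ (solve any maximal nonsingular subsystem by Gaussian elimination over $F$). Integrating the identity against $\phi^{-n}=\varphi^{n}$ over $S^1$ for every $n\ge 0$ produces the family of relations
\begin{equation}
\int_{-1/2}^{1/2}\phi^{-n}\Pi_{k,2}\,dt=\sum_{i=0}^{k+2}\tau^{k,2}_i\int_{-1/2}^{1/2}\varphi^{n}\varpi^{k,2}_i\,dt,\qquad n\ge 0,
\end{equation}
which is legitimate since all the series converge absolutely on $S^1$ (their coefficients decay like $m^{-k}$ with $k\ge 2$, and $\log^{j}\varphi$ is bounded there). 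It then suffices to show (i) every constant and every coefficient above lies in $F$, and (ii) the functionals $\Pi\mapsto\int_{-1/2}^{1/2}\varphi^{n}\Pi\,dt$, $n\ge 0$, separate $\ker\mathscr{D}_{k,2}$, so $\{\tau^{k,2}_i\}$ is the unique solution.

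For (i), the left-hand sides are in fact rational: by orthogonality $\int_{-1/2}^{1/2}\varphi^{\ell}\,dt=\delta_{\ell,0}$, only the term with $m+1=n$ survives, giving $(-1)^{n-1}H_{n-1,2}/n^{k}$ for $n\ge 2$ and $0$ for $n=0,1$. On the right, the pure power-series parts of each $\varpi^{k,2}_i$ integrate to $0$ against $\varphi^{n}$ (again $n+m\ge 1$), so only the top logarithmic term and the $\log^{j}\varphi$ cross terms contribute. The purely logarithmic contributions are evaluated by the explicit formulas for $\int_{-1/2}^{1/2}\varphi^{n}\log^{j}\varphi\,dt$, all of which lie in $\mathbb{Q}(\pi)$; each cross term, after using these formulas, becomes a rational multiple of a sum $S(n,k_1,k_2)$ (for $n\ge 1$) or of a single zeta value (for $n=0$), and the recursion for $S(m,k_1,k_2)$ with its three base cases expresses everything through zeta values and harmonic numbers. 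A weight count shows the heaviest zeta value that can occur is $\zeta(k+2)$, and only through $\varpi^{k,2}_{k+2}$ in the row $n=0$ (via its $\log^{2}\varphi$ and $\log\varphi$ cross terms); every entry with $n\ge 1$ reaches at most weight $k+1$. Since $\zeta(2)=\pi^{2}/6\in\mathbb{Q}(\pi)$, all constants and coefficients lie in $F$.

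Step (ii) is the main obstacle, and I would settle it using the Picard–Fuchs operator rather than a brute-force determinant. Suppose $\Pi=\sum_i\tau_i\varpi^{k,2}_i\in\ker\mathscr{D}_{k,2}$ has $\int_{-1/2}^{1/2}\varphi^{n}\Pi\,dt=0$ for all $n\ge 0$; these are exactly the non-positive Fourier coefficients of $\Pi$ on $S^1$, so $\Pi$ has only strictly positive frequencies and agrees on $S^1$ with $g(\varphi)=\sum_{\ell\ge 1}\widehat{\Pi}(\ell)\varphi^{\ell}$, holomorphic on $|\varphi|<1$ with $g(0)=0$. An identity-theorem argument on an arc of $S^1$ away from $\varphi=-1$ (where the chosen branch of $\log\varphi$ jumps) shows $g$ is a single-valued holomorphic solution of $\mathscr{D}_{k,2}$ near $\varphi=0$. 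But the only holomorphic solutions there are the constants: matching the coefficient of $\log^{j}\varphi$ from $j=k+2$ downward forces $\tau_{k+2}=\tau_{k+1}=\cdots=\tau_{1}=0$, since each $\varpi^{k,2}_i$ has leading term $\log^{i}\varphi$ with coefficient $1$; hence $g$ is constant and $g(0)=0$ gives $g\equiv0$, so $\Pi\equiv 0$. Thus the functionals separate $\ker\mathscr{D}_{k,2}$, the system of (i) has $\{\tau^{k,2}_i\}$ as its unique solution, and solving any nonsingular $(k+3)\times(k+3)$ subsystem over $F$ yields $\tau^{k,2}_i\in F$. The two points I expect to require care are the weight bookkeeping in (i), ensuring nothing exceeds $\zeta(k+2)$, and the Hardy-space and branch-cut justification of the holomorphic extension in (ii).
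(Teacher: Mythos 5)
Your proposal is correct, and its skeleton is the paper's: integrate $\Pi_{k,2}=\sum_i\tau^{k,2}_i\varpi^{k,2}_i$ against $\varphi^n$ over $S^1$, check that every coefficient of the resulting linear system lies in $F=\mathbb{Q}(\pi,\zeta(3),\cdots,\zeta(k+2))$ via the recursion for $S(m,k_1,k_2)$, and solve over $F$. The paper, however, stops at the assertion that solving the equations for $n=0,1,\cdots,k+2$ yields the $\tau^{k,2}_i$; it never verifies that the resulting $(k+3)\times(k+3)$ matrix is nonsingular, which is the one point at which the lemma could actually fail (a singular system with entries in $F$ need not determine a solution in $F$). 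Your step (ii) supplies exactly that missing ingredient, and by a soft argument valid for all $k$ at once rather than a case-by-case determinant computation: an element of $\ker\mathscr{D}_{k,2}$ annihilated by all the functionals $\int_{-1/2}^{1/2}\varphi^n(\cdot)\,dt$, $n\geq 0$, has only positive Fourier frequencies, hence coincides with a single-valued holomorphic solution $g$ on $|\varphi|<1$, which forces $\tau_{k+2}=\cdots=\tau_1=0$ by matching logarithmic leading terms from the top down, and then $\tau_0=0$ from $g(0)=0$. The two points you flag as delicate are indeed the ones to write out carefully --- the $H^2$ boundary-uniqueness step identifying $g$ with the holomorphic continuation of $\Pi$ across an arc of $S^1$ avoiding $\varphi=-1$, and the passage from uniqueness for the infinite system to the existence of a nonsingular finite subsystem over $F$ --- but both are standard. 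Your weight bookkeeping in (i) is consistent with the paper's explicit value $\int_{-1/2}^{1/2}\varpi^{k,2}_{k+2}\,dt=\frac{\left(1+(-1)^{k+2}\right)(\pi i)^{k+2}}{2(k+3)}-(k+1)(k+2)!\,\zeta(k+2)$, which confirms that $\zeta(k+2)$ enters only through the $n=0$ row; note also that the factors $(\pi i)^j$ are harmless for membership in $\mathbb{Q}(\pi)$ because they are always accompanied by the prefactor $1+(-1)^j$, which vanishes unless $j$ is even.
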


\noindent We now look at the case where $k=2$.

\subsection{\texorpdfstring{$\bm{\zeta(2,2)}$}{z(2,2)}}

When $k=2$, use the same method as in Section \ref{sec:sectionzeta21}, we obtain
\begin{equation}
\tau^{2,2}_0=-\frac{19}{4} \zeta(4),~\tau^{2,2}_1=-2\zeta(3),~\tau^{2,2}_2=-\frac{1}{2}\zeta(2),~\tau^{2,2}_3=0,~\tau^{2,2}_4=-\frac{1}{4!}.
\end{equation}
The equations
\begin{equation}
\Pi_{2,2}(1)=\sum_{i=0}^4 \tau^{2,2}_i \varpi^{2,2}_i(1)~\text{and}~\Pi_{2,2}(-1)=\sum_{i=0}^4 \tau^{2,2}_i \varpi^{2,2}_i(-1)
\end{equation}
give us the trivial identity $0=0$.

\subsection{\texorpdfstring{$\bm{\zeta(3,2)}$}{z(3,2)}}

When $k=3$, use the same method as in Section \ref{sec:sectionzeta21}, we obtain
\begin{equation}
\begin{aligned}
&\tau^{3,2}_0=4\zeta(5)+2\zeta(2)\zeta(3),\tau^{3,2}_1=\frac{19}{4} \zeta(4),~\tau^{3,2}_2=\zeta(3),\\
&\tau^{3,2}_3=\frac{1}{6}\zeta(2),\tau^{3,2}_4=0,~\tau^{3,2}_5=\frac{1}{5!}.
\end{aligned}
\end{equation}
The equations
\begin{equation}
\Pi_{3,2}(1)=\sum_{i=0}^5 \tau^{3,2}_i \varpi^{3,2}_i(1)~\text{and}~\Pi_{3,2}(-1)=\sum_{i=0}^5 \tau^{3,2}_i \varpi^{3,2}_i(-1)
\end{equation}
give us the following lemma.
\begin{lemma}
\begin{equation}
\zeta(3,2)=\sum_{n=1}^\infty \frac{H_{n,2}}{(n+1)^3}=-\frac{11}{2} \zeta(5)+3\zeta(2)\zeta(3),~\sum_{n=1}^\infty \frac{(-1)^nH_{n,2}}{(n+1)^3}=-\frac{41}{32} \zeta(5)+\frac{5}{8}\zeta(2) \zeta(3).
\end{equation}
\end{lemma}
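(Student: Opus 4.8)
The plan is to follow the template established in Section \ref{sec:sectionzeta21}: starting from the expansion $\Pi_{3,2}=\sum_{i=0}^{5}\tau^{3,2}_i\varpi^{3,2}_i$ with the coefficients $\tau^{3,2}_i$ already recorded above, I would evaluate both sides at the two points $\phi=\varphi=1$ and $\phi=\varphi=-1$ and solve for the two sums in question. The crucial algebraic input is the rewritten form of $\varpi^{3,2}_{5}$, in which the last power series is exactly $-5!\sum_{n}(-1)^n H_{n,2}/(n+1)^3\,\varphi^{n+1}$; at $\varphi=\pm1$ this term reproduces (up to sign and the factor $5!$) precisely the two quantities $\zeta(3,2)$ and $\sum_n(-1)^nH_{n,2}/(n+1)^3$ that the lemma asserts, so after evaluation each equation becomes a single linear relation in one unknown.

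Concretely, at $\varphi=1$ one has $\log\varphi=0$, so every logarithmic contribution in the basis vanishes and only the constant power series survive. The series in $\varpi^{3,2}_3,\varpi^{3,2}_4$ collapse to alternating single zeta values $\sum_n(-1)^n/n^m=-(1-2^{1-m})\zeta(m)$ for $m=3,4,5$, while the series in $\varpi^{3,2}_5$ produces both such an alternating value and a copy of $A:=\sum_n(-1)^nH_{n,2}/(n+1)^3$. Since $\Pi_{3,2}(1)=A$ as well, the equation $\Pi_{3,2}(1)=\sum_i\tau^{3,2}_i\varpi^{3,2}_i(1)$ takes the shape $A=(\text{zeta values})-A$, which I solve to obtain $A=-\tfrac{41}{32}\zeta(5)+\tfrac58\zeta(2)\zeta(3)$.

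At $\varphi=-1$ the situation is the same in spirit but requires more bookkeeping: now $\log\varphi=\pi i$, so $\varpi^{3,2}_j(-1)$ picks up the powers $(\pi i)^j$ together with cross terms of the form $(\pi i)^a\cdot(\text{power series at }-1)$; the power series themselves now evaluate to genuine zeta values $\sum_n(-1)^n(-1)^n/n^m=\zeta(m)$, and the distinguished series in $\varpi^{3,2}_5$ yields $+5!\,\zeta(3,2)$ because $\Pi_{3,2}(-1)=-\zeta(3,2)$. Collecting terms and substituting $\zeta(2)=\pi^2/6$, $\zeta(4)=\pi^4/90$ splits the relation into real and imaginary parts; the coefficient of $\zeta(3,2)$ on the right is $\tau^{3,2}_5\cdot5!=+1$ (this is the parity feature that makes the method nontrivial for odd $k$), so the real part reads $-\zeta(3,2)=11\zeta(5)-6\zeta(2)\zeta(3)+\zeta(3,2)$ and gives $\zeta(3,2)=-\tfrac{11}{2}\zeta(5)+3\zeta(2)\zeta(3)$.

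The only genuinely delicate point is the second evaluation: one must verify that all the purely imaginary contributions — the odd powers $(\pi i)^{1},(\pi i)^{3},(\pi i)^{5}$ arising from $\varpi^{3,2}_1,\varpi^{3,2}_3,\varpi^{3,2}_5$ together with the cross terms — cancel after inserting $\zeta(2)=\pi^2/6$ and $\zeta(4)=\pi^4/90$. This cancellation is forced by the fact that $\zeta(3,2)$ is real, and I expect it to reduce to checking a single rational identity among the coefficients of $\pi^5 i$; confirming it is the main (though routine) obstacle, while the remainder is the same elementary Fourier/evaluation bookkeeping already used for $\zeta(2,1)$.
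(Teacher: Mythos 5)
Your proposal is correct and follows essentially the same route as the paper: evaluate the expansion $\Pi_{3,2}=\sum_i\tau^{3,2}_i\varpi^{3,2}_i$ at $\varphi=\pm1$ using the rewritten form of $\varpi^{3,2}_5$, whose last series contributes $\mp120$ times the sought sum, so each evaluation is a linear equation in one unknown. The one step you deferred — cancellation of the imaginary part at $\varphi=-1$ — does check out: with $\zeta(2)=\pi^2/6$ and $\zeta(4)=\pi^4/90$ the coefficient of $\pi^5 i$ is $\tfrac{19}{360}-\tfrac{10}{360}+\tfrac{3}{360}-\tfrac{12}{360}=0$.
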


\subsection{\texorpdfstring{$\bm{\zeta(4,2)}$}{z(4,2)}}

When $k=4$, use the same method as in Section \ref{sec:sectionzeta21}, we obtain
\begin{equation}
\begin{aligned}
&\tau^{4,2}_0=-\frac{195}{16}\zeta(6),~\tau^{4,2}_1=-4\zeta(5)-2\zeta(2)\zeta(3),~\tau^{4,2}_2=-\frac{19}{8} \zeta(4),\\
&\tau^{4,2}_3=-\frac{1}{3}\zeta(3),~\tau^{4,2}_4=-\frac{1}{24}\zeta(2),~\tau^{4,2}_5=0,~\tau^{4,2}_6=-\frac{1}{6!}.
\end{aligned}
\end{equation}
The equations
\begin{equation}
\Pi_{4,2}(1)=\sum_{i=0}^6 \tau^{4,2}_i \varpi^{4,2}_i(1)~\text{and}~\Pi_{4,2}(-1)=\sum_{i=0}^6 \tau^{4,2}_i \varpi^{4,2}_i(-1)
\end{equation}
give us the trivial identity $0=0$.

\subsection{\texorpdfstring{$\bm{\zeta(5,2)}$}{z(5,2)}}

When $k=5$, use the same method as in Section \ref{sec:sectionzeta21}, we obtain
\begin{equation}
\begin{aligned}
&\tau^{5,2}_0=6\zeta(7)+4\zeta(2)\zeta(5)+\frac{7}{2}\zeta(3)\zeta(4),~\tau^{5,2}_1=\frac{195}{16}\zeta(6),~\tau^{5,2}_2=2 \zeta(5)+\zeta(2)\zeta(3),\\
&\tau^{5,2}_3=\frac{19}{24} \zeta(4),\tau^{5,2}_4=\frac{1}{12}\zeta(3),~\tau^{5,2}_5=\frac{1}{120}\zeta(2),~\tau^{5,2}_6=0,~\tau^{5,2}_7=\frac{1}{7!}.
\end{aligned}
\end{equation}
The equations
\begin{equation}
\Pi_{5,2}(1)=\sum_{i=0}^7 \tau^{5,2}_i \varpi^{5,2}_i(1)~\text{and}~\Pi_{5,2}(-1)=\sum_{i=0}^7 \tau^{5,2}_i \varpi^{5,2}_i(-1)
\end{equation}
give us the following lemma.
\begin{lemma}
\begin{equation}
\begin{aligned}
&\zeta(5,2)=\sum_{n=1}^\infty \frac{H_{n,2}}{(n+1)^5}=-11 \zeta(7)+5\zeta(2)\zeta(5)+2\zeta(3)\zeta(4), \\
&\sum_{n=1}^\infty \frac{(-1)^nH_{n,2}}{(n+1)^5}=-\frac{39}{8} \zeta(7)+\frac{49}{32}\zeta(2) \zeta(5)+\frac{7}{4} \zeta(3)\zeta(4).
\end{aligned}
\end{equation}
\end{lemma}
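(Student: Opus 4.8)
The plan is to evaluate the already-established expansion $\Pi_{5,2}=\sum_{i=0}^{7}\tau^{5,2}_i\,\varpi^{5,2}_i$ at the two boundary points $\phi=1$ ($\varphi=1$) and $\phi=-1$ ($\varphi=-1$), where both sides extend continuously because all the power series involved converge absolutely on $|\varphi|=1$. On the left the definition of $\Pi_{5,2}$ gives directly
\begin{equation}
\Pi_{5,2}(1)=\sum_{n=1}^\infty \frac{(-1)^nH_{n,2}}{(n+1)^5},\qquad \Pi_{5,2}(-1)=-\zeta(5,2),
\end{equation}
so the two evaluations will produce, respectively, the alternating sum and $\zeta(5,2)$, which are the two quantities in the statement.

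First I would record the boundary values of the canonical basis. For $i=0,\dots,4$ the solution $\varpi^{5,2}_i=\log^i\varphi$ contributes $\delta_{i,0}$ at $\varphi=1$ and $(\pi i)^i$ at $\varphi=-1$ (using $\log\varphi=2\pi i\,t$ with $t=0,\tfrac12$). For $i=5,6,7$ (that is, $k,k+1,k+2$) I would substitute $\varphi=\pm1$ into the explicit series of the Proposition, evaluating each power series through the elementary identities $\sum_{n\ge1}(-1)^n/n^s=-(1-2^{1-s})\zeta(s)$ at $\varphi=1$ and $\sum_{n\ge1}1/n^s=\zeta(s)$ at $\varphi=-1$. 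The decisive feature is that the top solution $\varpi^{5,2}_7$ carries the tail $-7!\sum_{n\ge1}\frac{(-1)^nH_{n,2}}{(n+1)^5}\varphi^{n+1}$, which equals $-7!\,\Pi_{5,2}(1)$ at $\varphi=1$ and $7!\,\zeta(5,2)$ at $\varphi=-1$; this makes the unknown sums reappear inside the right-hand side.

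Substituting, and using $\tau^{5,2}_6=0$ together with the vanishing of $\log^i\varphi$ at $\varphi=1$, the $\phi=1$ equation collapses to a single linear relation in which $A:=\Pi_{5,2}(1)$ occurs on both sides: once as the left-hand side, and once through $\tau^{5,2}_7\cdot(-7!\,A)=-A$ coming from $\varpi^{5,2}_7(1)$. Solving $2A=(\text{zeta-value terms})$ yields the second identity. At $\phi=-1$ the tail of $\varpi^{5,2}_7$ contributes $\tau^{5,2}_7\cdot 7!\,\zeta(5,2)=\zeta(5,2)$ to the right-hand side while the left-hand side is $-\zeta(5,2)$, so writing $B:=\zeta(5,2)$ the equation reads $-B=(\text{zeta-value terms})+B$, i.e. $-2B=(\text{zeta-value terms})$, which gives the first identity. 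The two equations decouple, one in $A$ and one in $B$.

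The main obstacle is purely the bookkeeping in the $\varphi=-1$ evaluation. There each $\varpi^{5,2}_i(-1)$ is weighted by a power $(\pi i)^j$ times a $\zeta$-value of complementary weight, so the remaining terms form an a priori complex-looking combination of $(\pi i)^7$, $(\pi i)^5\zeta(5)$, $(\pi i)^3\zeta(4)$, $(\pi i)\zeta(6)$ and odd $\zeta$-values. One must check that the contributions carrying odd powers of $\pi i$ cancel, which is automatic since the left-hand side $-B$ is real, and that the even zeta values recombine through $\zeta(2m)\in\mathbb{Q}\,\pi^{2m}$ into the clean weight-$7$ expression $-11\zeta(7)+5\zeta(2)\zeta(5)+2\zeta(3)\zeta(4)$. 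Given the explicit values of $\tau^{5,2}_i$ already computed and the elementary zeta and alternating-zeta identities above, this final reduction is routine.
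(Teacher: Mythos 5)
Your proposal is correct and follows the same route the paper takes: evaluate $\Pi_{5,2}=\sum_{i=0}^{7}\tau^{5,2}_i\varpi^{5,2}_i$ at $\varphi=\pm1$, use the rewritten form of $\varpi^{5,2}_{7}$ whose tail reproduces $\mp$ the unknown sum so that each boundary evaluation becomes a linear equation $2A=(\cdots)$ or $-2B=(\cdots)$, exactly as in the paper's template from Section \ref{sec:sectionzeta21}. Your observation that the odd powers of $\pi i$ must cancel because both sides are real is a clean shortcut, and the arithmetic indeed closes to the stated values.
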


\subsection{\texorpdfstring{$\bm{\zeta(6,2)}$}{z(6,2)}}

When $k=6$, use the same method as in Section \ref{sec:sectionzeta21}, we obtain
\begin{equation}
\begin{aligned}
&\tau^{6,2}_0=-\frac{4501}{192}\zeta(8),\tau^{6,2}_1=-6\zeta(7)-4\zeta(2)\zeta(5)-\frac{7}{2}\zeta(3)\zeta(4),\tau^{6,2}_2=-\frac{195}{32}\zeta(6),\\
&\tau^{6,2}_3=-\frac{2}{3} \zeta(5)-\frac{1}{3}\zeta(2)\zeta(3),\tau^{6,2}_4=-\frac{19}{96} \zeta(4),\tau^{6,2}_5=-\frac{1}{60}\zeta(3),\\
&\tau^{6,2}_6=-\frac{1}{720}\zeta(2),~\tau^{6,2}_7=0,~\tau^{6,2}_8=-\frac{1}{8!}.
\end{aligned}
\end{equation}
The equations
\begin{equation}
\Pi_{6,2}(1)=\sum_{i=0}^8 \tau^{6,2}_i \varpi^{6,2}_i(1)~\text{and}~\Pi_{6,2}(-1)=\sum_{i=0}^8 \tau^{6,2}_i \varpi^{6,2}_i(-1)
\end{equation}
give us the trivial identity $0=0$.

\subsection{\texorpdfstring{$\bm{\zeta(7,2)}$}{z(7,2)}}

When $k=7$, use the same method as in Section \ref{sec:sectionzeta21}, we obtain
\begin{equation}
\begin{aligned}
&\tau^{7,2}_0=8 \zeta(9)+6\zeta(2)\zeta(7)+\frac{31}{8}\zeta(3)\zeta(6)+7\zeta(4)\zeta(5),\tau^{7,2}_1=\frac{4501}{192}\zeta(8),\\
&\tau^{7,2}_2=3\zeta(7)+2\zeta(2)\zeta(5)+\frac{7}{4}\zeta(3)\zeta(4),\tau^{7,2}_3=\frac{65}{32}\zeta(6),\tau^{7,2}_4=\frac{1}{6} \zeta(5)+\frac{1}{12}\zeta(2)\zeta(3),\\
&\tau^{7,2}_5=\frac{19}{480} \zeta(4),\tau^{7,2}_6=\frac{1}{360}\zeta(3),\tau^{7,2}_6=\frac{1}{5040}\zeta(2),~\tau^{7,2}_8=0,\tau^{7,2}_9=\frac{1}{9!}.
\end{aligned}
\end{equation}
The equations
\begin{equation}
\Pi_{7,2}(1)=\sum_{i=0}^9 \tau^{7,2}_i \varpi^{7,2}_i(1)~\text{and}~\Pi_{7,2}(-1)=\sum_{i=0}^9 \tau^{7,2}_i \varpi^{7,2}_i(-1)
\end{equation}
give us the following lemma.
\begin{lemma}
\begin{equation}
\begin{aligned}
&\zeta(7,2)=\sum_{n=1}^\infty \frac{H_{n,2}}{(n+1)^7}=-\frac{37}{2}\zeta(9)+7\zeta(2)\zeta(7)+2\zeta(3)\zeta(6)+4\zeta(4)\zeta(5), \\
&\sum_{n=1}^\infty \frac{(-1)^nH_{n,2}}{(n+1)^7}=-\frac{5347}{512} \zeta(9)+\frac{321}{128}\zeta(2) \zeta(7)+\frac{31}{16}\zeta(3)\zeta(6)+\frac{7}{2} \zeta(4)\zeta(5).
\end{aligned}
\end{equation}
\end{lemma}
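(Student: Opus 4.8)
The plan is to substitute the special values $\phi=\pm 1$ (equivalently $\varphi=\pm 1$) into the linear relation $\Pi_{7,2}=\sum_{i=0}^{9}\tau^{7,2}_i\,\varpi^{7,2}_i$, using the coefficients $\tau^{7,2}_i$ recorded just above the statement, and then to solve the two resulting scalar identities for $\zeta(7,2)$ and for $\sum_{n\ge 1}(-1)^nH_{n,2}/(n+1)^7$ respectively. Throughout I use that, under the parameterization $\varphi=\exp 2\pi i\,t$, the chosen branch of $\log\varphi$ satisfies $\log\varphi=0$ at $\varphi=1$ (i.e. $t=0$) and $\log\varphi=\pi i$ at $\varphi=-1$ (i.e. $t=1/2$), and that the power-series parts of $\varpi^{7,2}_7,\varpi^{7,2}_8,\varpi^{7,2}_9$ converge absolutely on $S^1$, so the relation extends by continuity to $\varphi=\pm 1$.

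First I would evaluate at $\varphi=-1$. Here $\Pi_{7,2}(-1)=-\zeta(7,2)$, while each basis value $\varpi^{7,2}_i(-1)$ is read off from the closed forms in the Proposition by setting $\log\varphi=\pi i$ and summing the power series at $\varphi=-1$; the only analytic inputs are $\sum_{n\ge1}(-1)^n/n^s=(2^{1-s}-1)\zeta(s)$ and $\sum_{n\ge1}(-1)^n(-1)^n/n^s=\zeta(s)$, together with $(\pi i)^{2}=-6\zeta(2)$, $(\pi i)^4=90\zeta(4)$, $(\pi i)^6=-945\zeta(6)$, $(\pi i)^8=9450\zeta(8)$. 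The decisive feature is the last basis vector: in the rewritten form of $\varpi^{7,2}_{9}$, its value at $\varphi=-1$ contains the term $9!\,\zeta(7,2)$ coming from $-\sum_n 9!\,(-1)^nH_{n,2}(n+1)^{-7}\varphi^{n+1}$, and since $\tau^{7,2}_{9}=1/9!$ this contributes exactly $+\zeta(7,2)$ to the right-hand side. Thus $\zeta(7,2)$ appears on both sides and the identity collapses to $-\zeta(7,2)=\zeta(7,2)+(\text{terms in }\zeta(2),\dots,\zeta(9))$, which I solve to obtain the first formula.

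Next I would evaluate at $\varphi=1$, where $\log\varphi=0$ annihilates every pure-logarithm solution $\varpi^{7,2}_i$ with $0<i\le 6$, as well as all the $\log\varphi$-weighted pieces of $\varpi^{7,2}_{7},\varpi^{7,2}_{8},\varpi^{7,2}_{9}$, leaving only constant power-series values. Writing $A=\sum_{n\ge1}(-1)^nH_{n,2}/(n+1)^7=\Pi_{7,2}(1)$, the same term of $\varpi^{7,2}_{9}$ now produces $-9!\,A$, so with $\tau^{7,2}_9=1/9!$ the quantity $A$ again appears on both sides, the identity reducing to $A=(\text{alternating zeta values})-A$; solving for $2A$ and expressing the alternating sums $\sum(-1)^n/n^s$ through $\zeta(s)$ as above yields the second formula.

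The genuinely mechanical but bookkeeping-heavy step is collecting the weight-nine contributions into the four products $\zeta(9),\zeta(2)\zeta(7),\zeta(3)\zeta(6),\zeta(4)\zeta(5)$; here the main thing to watch is that all odd powers of $\pi i$ (the purely imaginary contributions, e.g. from $\varpi^{7,2}_1,\varpi^{7,2}_3,\varpi^{7,2}_5$ and the $\log$-linear pieces of $\varpi^{7,2}_{8},\varpi^{7,2}_{9}$) cancel in the total. This cancellation is forced a priori because $\Pi_{7,2}(\pm1)$ is real, so it serves as a consistency check rather than an extra hypothesis; once the imaginary parts are discarded, matching the real part against the two left-hand sides gives the claimed evaluations.
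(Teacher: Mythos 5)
Your proposal is correct and follows exactly the paper's method: substituting $\varphi=\pm1$ into $\Pi_{7,2}=\sum_{i=0}^{9}\tau^{7,2}_i\varpi^{7,2}_i$ and exploiting the $\pm\, 9!\,\zeta(7,2)$ (resp.\ $-9!\,A$) term hidden in $\varpi^{7,2}_{9}$, which is precisely how the paper derives this lemma (carried out in detail only for the $\zeta(2,1)$ case and invoked verbatim here). Your structural observations --- that $\tau^{7,2}_9=1/9!$ makes the unknown appear on both sides with opposite signs, and that the odd powers of $\pi i$ must cancel by reality --- are both accurate.
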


\subsection{\texorpdfstring{$\bm{\zeta(8,2)}$}{z(8,2)}}

When $k=8$, use the same method as in Section \ref{sec:sectionzeta21}, we obtain
\begin{equation}
\begin{aligned}
&\tau^{8,2}_0=-\frac{49363}{1280}\zeta(10),\tau^{8,2}_1=-8 \zeta(9)-6\zeta(2)\zeta(7)-\frac{31}{8}\zeta(3)\zeta(6)-7\zeta(4)\zeta(5),\\
&\tau^{8,2}_2=-\frac{4501}{384}\zeta(8),\tau^{8,2}_3=-\zeta(7)-\frac{2}{3}\zeta(2)\zeta(5)-\frac{7}{12}\zeta(3)\zeta(4),\tau^{8,2}_4=-\frac{65}{128}\zeta(6),\\
&\tau^{8,2}_5=-\frac{1}{30} \zeta(5)-\frac{1}{60}\zeta(2)\zeta(3),\tau^{8,2}_6=-\frac{19}{2880} \zeta(4),\tau^{8,2}_7=-\frac{1}{2520}\zeta(3), \\
&\tau^{8,2}_8=-\frac{1}{40320}\zeta(2),~\tau^{8,2}_9=0,\tau^{8,2}_{10}=-\frac{1}{10!}.
\end{aligned}
\end{equation}
The equations
\begin{equation}
\Pi_{8,2}(1)=\sum_{i=0}^{10} \tau^{8,2}_i \varpi^{8,2}_i(1)~\text{and}~\Pi_{8,2}(-1)=\sum_{i=0}^{10} \tau^{8,2}_i \varpi^{8,2}_i(-1)
\end{equation}
give us the trivial identity $0=0$.

\subsection{\texorpdfstring{$\bm{\zeta(9,2)}$}{z(9,2)}}

When $k=9$, use the same method as in Section \ref{sec:sectionzeta21}, we obtain
\begin{equation}
\begin{aligned}
&\tau^{9,2}_0=10\zeta(11)+8\zeta(2) \zeta(9)+\frac{127}{32}\zeta(3)\zeta(8)+\frac{21}{2}\zeta(4)\zeta(7)+\frac{31}{4} \zeta(5)\zeta(6),\tau^{9,2}_1=\frac{49363}{1280}\zeta(10),\\
&\tau^{9,2}_2=4 \zeta(9)+3\zeta(2)\zeta(7)+\frac{31}{16}\zeta(3)\zeta(6)+\frac{7}{2} \zeta(4)\zeta(5),\tau^{9,2}_3=\frac{4501}{1152}\zeta(8),\\
&\tau^{9,2}_4=\frac{1}{4}\zeta(7)+\frac{1}{6}\zeta(2)\zeta(5)+\frac{7}{48}\zeta(3)\zeta(4),\tau^{9,2}_5=\frac{13}{128}\zeta(6),\tau^{9,2}_6=\frac{1}{180} \zeta(5)+\frac{1}{360}\zeta(2)\zeta(3),\\
&\tau^{9,2}_7=\frac{19}{20160} \zeta(4),\tau^{9,2}_8=\frac{1}{20160}\zeta(3),\tau^{9,2}_9=\frac{1}{362880}\zeta(2),\tau^{9,2}_{10}=0,\tau^{9,2}_{11}=\frac{1}{11!}.
\end{aligned}
\end{equation}
The equations
\begin{equation}
\Pi_{9,2}(1)=\sum_{i=0}^{11} \tau^{9,2}_i \varpi^{9,2}_i(1)~\text{and}~\Pi_{9,2}(-1)=\sum_{i=0}^{11} \tau^{9,2}_i \varpi^{9,2}_i(-1)
\end{equation}
give us the following lemma.
\begin{lemma}
\begin{equation}
\begin{aligned}
&\zeta(9,2)=\sum_{n=1}^\infty \frac{H_{n,2}}{(n+1)^9}=-28\zeta(11)+9 \zeta(2)\zeta(9)+2\zeta(3)\zeta(8)+6\zeta(4)\zeta(7)+4\zeta(5)\zeta(6), \\
&\sum_{n=1}^\infty \frac{(-1)^nH_{n,2}}{(n+1)^9}=-\frac{18409}{1024}\zeta(11)+\frac{1793}{512}\zeta(2) \zeta(9)+\frac{127}{64}\zeta(3) \zeta(8)+\frac{21}{4}\zeta(4)\zeta(7)+\frac{31}{8} \zeta(5)\zeta(6).
\end{aligned}
\end{equation}
\end{lemma}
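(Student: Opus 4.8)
The plan is to follow the template of Section~\ref{sec:sectionzeta21} verbatim. I take as given the linear expansion $\Pi_{9,2}=\sum_{i=0}^{11}\tau^{9,2}_i\varpi^{9,2}_i$ together with the coefficients $\tau^{9,2}_i$ listed just above, which are obtained exactly as in the earlier subsections by equating the Fourier coefficients $\int_{-1/2}^{1/2}\phi^{-n}\Pi_{9,2}\,dt=\sum_i\tau^{9,2}_i\int_{-1/2}^{1/2}\varphi^n\varpi^{9,2}_i\,dt$ for $n=0,1,\dots,11$ and solving the resulting system. Granting these values, I would specialize the functional identity to the two points $\varphi=1$ and $\varphi=-1$ on $S^1$: the first equation then produces the alternating sum and the second produces $\zeta(9,2)$.

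First I would record the two values of the left-hand side. At $\phi=-1$ one has $\Pi_{9,2}(-1)=\sum_n\frac{(-1)^nH_{n,2}}{(n+1)^9}(-1)^{n+1}=-\zeta(9,2)$, while at $\phi=1$ one has $\Pi_{9,2}(1)=\sum_n\frac{(-1)^nH_{n,2}}{(n+1)^9}$, the alternating sum to be evaluated. Next I would evaluate each basis element $\varpi^{9,2}_i$ at the two points. With the branch $\log\varphi=2\pi i\,t$ fixed in Section~\ref{sec:introductoryexamplepi}, one has $\log1=0$ and $\log(-1)=\pi i$. Hence at $\varphi=1$ every power $\log^j\varphi$ with $j\ge1$ drops out, so only the power-series parts survive; these reduce to alternating zeta values through $\sum_{n\ge1}(-1)^n n^{-s}=-(1-2^{1-s})\zeta(s)$. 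At $\varphi=-1$ the log-powers contribute $(\pi i)^j$ while the series $\sum(-1)^n n^{-s}\varphi^n$ become ordinary $\zeta(s)$, and the even-index $\zeta(2m)$ are rewritten as rational multiples of $\pi^{2m}$ so that the $(\pi i)^j$ terms are absorbed.

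The decisive structural point, in both specializations, is the top basis vector $\varpi^{9,2}_{11}$. Rewriting it via $H_{n,2}=H_{n-1,2}+n^{-2}$ exposes the tail $-(k+2)!\sum_n\frac{(-1)^nH_{n,2}}{(n+1)^k}\varphi^{n+1}$, which at $\varphi=-1$ equals $11!\,\zeta(9,2)$ and at $\varphi=1$ equals $-11!$ times the alternating sum. Since $\tau^{9,2}_{11}=1/11!$, this tail contributes exactly $+\zeta(9,2)$ (respectively $-$ the alternating sum) to the right-hand side, i.e.\ the very unknown already sitting on the left. Moving it across therefore turns each specialization into an explicit linear equation for a single unknown whose coefficient is $2$ up to sign, and solving yields the two stated closed forms.

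The main obstacle is bookkeeping rather than anything conceptual: one must collect a dozen terms pairing the coefficients $\tau^{9,2}_i$ with the values $\varpi^{9,2}_i(\pm1)$, and verify that at $\varphi=-1$ all purely imaginary contributions (the odd powers of $\pi i$) cancel, leaving a real identity. This cancellation is forced because $\zeta(9,2)$ is real, and it doubles as a consistency check on the listed $\tau^{9,2}_i$. The only genuinely external input is the correctness of those coefficients, which rests on the integral evaluations $\int_{-1/2}^{1/2}\varphi^n\log^m\varphi\,dt$ recorded in the earlier subsections and on the solvability of the $12\times12$ Fourier system; granting these, both identities follow from a finite and routine computation.
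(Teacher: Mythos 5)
Your proposal is correct and follows essentially the same route as the paper: the paper likewise takes the computed coefficients $\tau^{9,2}_i$ as input and simply specializes $\Pi_{9,2}=\sum_{i}\tau^{9,2}_i\varpi^{9,2}_i$ at $\varphi=\pm 1$, with the tail of $\varpi^{9,2}_{11}$ reproducing the unknown on the right so that each specialization becomes a linear equation with coefficient $2$ (exactly as in the worked $\zeta(2,1)$ template). Your reconstruction is in fact more explicit than the paper's own one-line justification.
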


\subsection{Generalization}

Again for large $k$, it is practically very difficult to solve the linear equations given by 
\begin{equation} \label{eq:zeta22general}
\int_{-1/2}^{1/2} \phi^{-n} \Pi_{k,2} dt=\sum_{i=0}^{k+2} \tau^{k,2}_i \,\int_{-1/2}^{1/2} \varphi^{n} \varpi^{k,2}_i dt,~\tau^{k,2}_i \in \mathbb{C}.
\end{equation}
From our computations of $\tau^{k,2}_i$ for the cases where $k=2,3,4,5,6,7,8,9$, we have also observed that
\begin{equation}
\tau^{k,2}_i=-\frac{1}{i}\tau^{k-1,2}_{i-1},~i \geq 1.
\end{equation}
On the other hand, let $n=0$ in the formula \ref{eq:zeta22general}, and we have
\begin{equation} \label{eq:tauk2zeroequation}
\tau^{k,2}_0=- \sum_{i=1}^{k+2} \tau^{k,2}_i \int_{-1/2}^{1/2}  \varpi_i dt,
\end{equation}
where we have used the integrals
\begin{equation}
\int_{-1/2}^{1/2}  \Pi_{k,2} dt=0,~\int_{-1/2}^{1/2}   \varpi^{k,2}_0 dt=1.
\end{equation}
The other integrals in the formula \ref{eq:tauk2zeroequation} can also be evaluated easily
\begin{equation}
\begin{aligned}
\int_{-1/2}^{1/2}  \varpi^{k,2}_j dt&=\frac{\left(1+(-1)^j \right) (\pi i)^j}{2(1+j)},~j=1,\cdots,k;\\
\int_{-1/2}^{1/2}  \varpi^{k,2}_{k+1} dt&=\frac{\left(1+(-1)^{k+1} \right) (\pi i)^{k+1}}{2(k+2)}+(k+1)!\zeta(k+1);\\
\int_{-1/2}^{1/2}  \varpi^{k,2}_{k+2} dt&=\frac{\left(1+(-1)^{k+2} \right) (\pi i)^{k+2}}{2(k+3)}-(k+1)(k+2)!\zeta(k+2).
\end{aligned}
\end{equation}

\begin{conjecture} \label{conjecturetauk2}
The complex number $\tau^{k,2}_i,i \geq 1$ is always equal to $-\tau^{k-1,2}_{i-1}/i$. Together with formula \ref{eq:tauk2zeroequation}, it gives us a very efficient algorithm to compute $\tau^{k,2}_i$.
\end{conjecture}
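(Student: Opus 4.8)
The plan is to prove the recursion $\tau^{k,2}_i = -\tau^{k-1,2}_{i-1}/i$ (for $i \geq 1$ and $k \geq 3$) by exploiting two compatibilities between consecutive values of $k$: one for the Picard–Fuchs operators and one for the period series $\Pi_{k,2}$. First I would record the operator factorization. Since every term of $\mathscr{D}_{k,2}$ carries the factor $\vartheta^{k}$ on the right while the remaining coefficients $(1+\phi)^2,\,-4(1+\phi),\,5+3\phi,\,-(2+\phi)$ are independent of $k$, one has the identity $\mathscr{D}_{k,2} = \mathscr{D}_{k-1,2}\circ\vartheta$, and the same factorization holds verbatim for the $\varphi$-form. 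In particular, if $\psi$ solves $\mathscr{D}_{k,2}$ then $\vartheta\psi$ solves $\mathscr{D}_{k-1,2}$. Second, differentiating the defining series termwise and using $\vartheta\,\phi^{n+1} = (n+1)\phi^{n+1}$ gives $\vartheta\,\Pi_{k,2} = \Pi_{k-1,2}$, where here $\vartheta = \phi\,d/d\phi$.

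The heart of the argument is to compute how $\vartheta$ acts on the canonical basis. Writing $\vartheta_\varphi = \varphi\,d/d\varphi$ for the $\varphi$-form, I claim $\vartheta_\varphi\varpi^{k,2}_i = i\,\varpi^{k-1,2}_{i-1}$ for $1 \leq i \leq k+2$ and $\vartheta_\varphi\varpi^{k,2}_0 = 0$. Rather than manipulate the explicit series, I would argue by local data at $\varphi = 0$. Since $\mathscr{D}_{k-1,2}$ has a single indicial root $0$ of multiplicity $k+2$, every solution has the form $\sum_{m=0}^{k+1} g_m(\varphi)\log^m\varphi$ with $g_m$ holomorphic, and the map $\psi \mapsto (g_0(0),\dots,g_{k+1}(0)) \in \mathbb{C}^{k+2}$ is linear. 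The Proposition describing the canonical basis shows that each $\varpi^{k-1,2}_j$ has top term $\log^j\varphi$ with coefficient $1$ and all lower-degree pieces given by series vanishing at $\varphi=0$; hence the canonical basis maps to the standard basis of $\mathbb{C}^{k+2}$, so this local-data map is an isomorphism. By the operator factorization $\vartheta_\varphi\varpi^{k,2}_i$ is a solution of $\mathscr{D}_{k-1,2}$; writing $\varpi^{k,2}_i = \sum_m h_m(\varphi)\log^m\varphi$, where the same Proposition gives $h_m(0) = \delta_{m,i}$, a one-line computation shows the $\log^p\varphi$-coefficient of $\vartheta_\varphi\varpi^{k,2}_i$ is $\vartheta_\varphi h_p + (p+1)h_{p+1}$, whose value at $\varphi=0$ is $(p+1)\delta_{p+1,i} = i\,\delta_{p,i-1}$ because $\vartheta_\varphi h_p=\varphi h_p'$ vanishes at the origin. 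Thus $\vartheta_\varphi\varpi^{k,2}_i$ and $i\,\varpi^{k-1,2}_{i-1}$ have identical local data, and by the isomorphism they coincide.

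Finally I would assemble the pieces. Applying $\vartheta = \phi\,d/d\phi$ to $\Pi_{k,2} = \sum_{i=0}^{k+2}\tau^{k,2}_i\varpi^{k,2}_i$ and using $\phi\,d/d\phi = -\varphi\,d/d\varphi$ on functions of $\varphi$ together with the basis identity gives
\begin{equation}
\Pi_{k-1,2} = \vartheta\,\Pi_{k,2} = -\sum_{i=1}^{k+2} i\,\tau^{k,2}_i\,\varpi^{k-1,2}_{i-1}.
\end{equation}
Comparing with the expansion $\Pi_{k-1,2} = \sum_{j=0}^{k+1}\tau^{k-1,2}_j\varpi^{k-1,2}_j$ and using that $\{\varpi^{k-1,2}_j\}$ is a basis yields $-i\,\tau^{k,2}_i = \tau^{k-1,2}_{i-1}$, i.e. the asserted recursion $\tau^{k,2}_i = -\tau^{k-1,2}_{i-1}/i$ for $i \geq 1$. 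Together with the base case $k=2$ already computed and formula \ref{eq:tauk2zeroequation} for $\tau^{k,2}_0$, this is exactly the claimed algorithm, and the identical argument settles Conjecture \ref{conjecturetauk1} as well.

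I expect the only genuine obstacle to be the careful bookkeeping in the local-data step, specifically verifying that all lower-degree log-pieces of the canonical basis elements (the series containing the $\log$-prefactors and the $H_{n,2}$-term in $\varpi^{k,2}_{k+2}$) indeed vanish at $\varphi=0$, so that each local-data vector is a pure standard basis vector. This is where the explicit normalization in the Proposition is essential; once it is in hand, the sign relation $\phi\,d/d\phi = -\varphi\,d/d\varphi$ and the matching of coefficients are routine.
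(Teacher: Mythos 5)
The paper does not prove this statement at all: it is offered as a conjecture supported only by the explicit computations for $k=2,\dots,9$ and is listed again in Section \ref{sec:furtherprospects} as an open problem. Your argument, by contrast, is an actual proof, and after checking the key identities I believe it is correct. The factorization $\mathscr{D}_{k,2}=\mathscr{D}_{k-1,2}\circ\vartheta$ holds in both the $\phi$- and $\varphi$-forms because every term carries $\vartheta^k$ on the right with $k$-independent coefficients; termwise differentiation gives $\vartheta\,\Pi_{k,2}=\Pi_{k-1,2}$; and the identity $\vartheta_\varphi\varpi^{k,2}_i=i\,\varpi^{k-1,2}_{i-1}$ can be confirmed directly from the explicit canonical basis (e.g.\ for $i=k+2$ the constant-in-$\log$ part works out because $\binom{k+1}{2}-k=\binom{k}{2}$), so your local-data shortcut is not even strictly needed. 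The sign $\phi\,d/d\phi=-\varphi\,d/d\varphi$ then yields $\tau^{k-1,2}_{i-1}=-i\,\tau^{k,2}_i$, which matches all the tabulated values (e.g.\ $\tau^{3,2}_5=-\tau^{2,2}_4/5=1/5!$). Two small points to tighten: state explicitly that the expansion $\Pi_{k,2}=\sum_i\tau^{k,2}_i\varpi^{k,2}_i$ is an identity of (branches of) analytic functions on a common domain, so it may be differentiated; and justify the uniqueness of the representation $\sum_m g_m(\varphi)\log^m\varphi$ (linear independence of powers of $\log\varphi$ over holomorphic functions), which underlies your local-data isomorphism. The recursion as proved holds for $k\geq3$ (so that $\Pi_{k-1,2}$ is defined), with $k=2$ serving as the base case; as you note, the identical argument disposes of Conjecture \ref{conjecturetauk1}. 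This upgrades both conjectures, and hence the two Corollaries that currently depend on them, to theorems.
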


\noindent In particular, we have the following corollary.
\begin{corollary}
The complex number $\tau^{k,2}_{k+2}$ is equal to $(-1)^{k+1}/(k+2)!$. When $k$ is an odd integer, the equations 
\begin{equation}
\Pi_{k,2}(1)=\sum_{i=0}^{k+2} \tau^{k,2}_i \varpi^{k,2}_i(1)~\text{and}~\Pi_{k,2}(-1)=\sum_{i=0}^{k+2} \tau^{k,2}_i \varpi^{k,2}_i(-1)
\end{equation}
will give us the values of
\begin{equation}
\zeta(k,2)=\sum_{n=1}^\infty \frac{H_{n,2}}{(n+1)^k}~\text{and}~\sum_{n=1}^\infty \frac{(-1)^nH_{n,2}}{(n+1)^k}
\end{equation}
in terms of the zeta values $\zeta(2),\cdots,\zeta(k+2)$. While when $k$ is even, the two equations become the trivial identity $0=0$.

\end{corollary}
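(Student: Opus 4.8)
The plan is to prove the two assertions separately: first the closed form $\tau^{k,2}_{k+2}=(-1)^{k+1}/(k+2)!$, and then the parity dichotomy governing the evaluations at $\varphi=\pm1$. For the top coefficient I would argue by induction on $k$, taking the recursion $\tau^{k,2}_i=-\tau^{k-1,2}_{i-1}/i$ of Conjecture~\ref{conjecturetauk2} as given. Applying it to the top index $i=k+2$ and iterating downward links the top coefficient for parameter $k$ to the top coefficients for all smaller parameters, terminating at the base value $\tau^{2,2}_4=-1/4!$ computed in the $\zeta(2,2)$ subsection. The chain $\tau^{k,2}_{k+2}=\bigl(\prod_{j=5}^{k+2}(-1/j)\bigr)\tau^{2,2}_4$ has $k-2$ sign factors and collapses to $(-1)^{k-2}\tfrac{4!}{(k+2)!}\cdot(-\tfrac{1}{4!})=(-1)^{k+1}/(k+2)!$, the claimed value. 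A parallel induction, with base $\tau^{2,2}_3=0$, shows along the way that $\tau^{k,2}_{k+1}=0$ for every $k$, a fact I will use below.

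Next I would evaluate the identity $\Pi_{k,2}=\sum_{i}\tau^{k,2}_i\varpi^{k,2}_i$ at the two boundary points. At $\varphi=1$ one has $\log\varphi=0$, so every $\log$-term drops and each $\varpi^{k,2}_i(1)$ reduces to an alternating zeta value; at $\varphi=-1$ one has $\log\varphi=\pi i$, so the $\log$-terms become powers of $\pi i$ while the power-series parts produce ordinary values $\zeta(k),\zeta(k+1),\zeta(k+2)$. The crucial bookkeeping point is that the target quantity — namely $\zeta(k,2)$ at $\varphi=-1$ and the alternating sum $\sum(-1)^nH_{n,2}/(n+1)^k$ at $\varphi=1$ — enters the right-hand side \emph{only} through the $H_{n,2}$-series inside $\varpi^{k,2}_{k+2}$, since no other basis element carries a double sum. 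Using the rewritten form of $\varpi^{k,2}_{k+2}$ together with $\tau^{k,2}_{k+2}=(-1)^{k+1}/(k+2)!$, the coefficient with which the target re-appears on the right is $(-1)^{k+1}$ at $\varphi=-1$ and $(-1)^{k}$ at $\varphi=1$, whereas it appears on the left with coefficient $1$.

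This sets up the dichotomy. Subtracting the self-referential term, the two equations read $-(1+(-1)^{k+1})\,\zeta(k,2)=R_{-}$ and $(1-(-1)^{k})\sum(-1)^nH_{n,2}/(n+1)^k=R_{+}$, where $R_{\pm}$ collect the remaining explicit terms built from $\pi$, $\zeta(k),\zeta(k+1),\zeta(k+2)$ and the $\tau^{k,2}_i$ (all lying in $\mathbb{Q}(\pi,\zeta(3),\dots,\zeta(k+2))$ by the preceding lemma). When $k$ is odd both prefactors equal $2$, so one solves for the target and, after substituting $\pi^{2m}\in\mathbb{Q}\,\zeta(2m)$, expresses it in $\zeta(2),\dots,\zeta(k+2)$; when $k$ is even both prefactors vanish and the target disappears, leaving $0=R_{\pm}$. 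To organize $R_{\pm}$ I would split each equation into real and imaginary parts: since all $\tau^{k,2}_i$ are real while $\varpi^{k,2}_i(-1)$ carries the factor $(\pi i)^{\bullet}$, the real part gathers the even-weight (pure-$\pi$, together with $\zeta(k),\zeta(k+2)$) contributions and the imaginary part gathers the odd-weight ($\pi i$ times $\zeta(k+1)$-type) contributions, with $\tau^{k,2}_{k+1}=0$ removing the would-be $\zeta(k+1)$ real contribution.

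The main obstacle is the final step: proving $R_-=R_+=0$ identically when $k$ is even, and, in the odd case, that the accompanying imaginary-part relation vanishes as a trivial $\pi$-identity. Concretely, one must show that after the target cancels, the surviving pairing of the $\tau^{k,2}_i$ against the closed-form values $\varpi^{k,2}_i(\pm1)$ collapses to zero. This is precisely what the case computations $k=2,4,6,8$ confirm; for instance at $k=2$ the residual real part is $-\tfrac{19}{4}\zeta(4)-4\zeta(4)+\tfrac13\pi^2\zeta(2)+\tfrac12\pi^2\zeta(2)-\tfrac{1}{24}\pi^4$, which vanishes once $\zeta(2)=\pi^2/6$ and $\zeta(4)=\pi^4/90$ are inserted, and the imaginary part cancels because the $\pi i\,\zeta(3)$ contributions of $\tau_1\varpi_1(-1)$ and $\tau_{4}\varpi_{4}(-1)$ are opposite. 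For a uniform proof I would replace the case check by a closed (or generating-function) expression for the $\tau^{k,2}_i$ derived from the recursion, turning $R_{\pm}$ into explicit sums over $i$; the vanishing should then follow from the Euler evaluations $\zeta(2m)\in\mathbb{Q}\,\pi^{2m}$ for the real part and from a telescoping of the binomial factors $\binom{k+2}{\bullet}$ appearing in $\varpi^{k,2}_{k+2}$ for the imaginary part. Establishing this cancellation in closed form, rather than one weight at a time, is the genuinely hard part, and it is inseparable from proving the recursion of Conjecture~\ref{conjecturetauk2} itself.
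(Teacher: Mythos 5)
Your derivation of $\tau^{k,2}_{k+2}=(-1)^{k+1}/(k+2)!$ by iterating the conjectured recursion down to the base value $\tau^{2,2}_4=-1/4!$, and your identification of the parity dichotomy through the sign with which the $H_{n,2}$-double-sum inside $\varpi^{k,2}_{k+2}$ reproduces the target at $\varphi=\pm1$, is exactly the mechanism the paper relies on; the paper itself states the corollary without any written proof, as an immediate consequence of Conjecture~\ref{conjecturetauk2}. The one point where you part ways with the paper is in treating the vanishing of the residual expressions $R_{\pm}$ in the even case (and the triviality of the imaginary-part relation in the odd case) as an additional cancellation that still has to be established: it does not. All the series involved have coefficients $O(n^{-k})$ with $k\geq 2$, so the identity $\Pi_{k,2}=\sum_i\tau^{k,2}_i\varpi^{k,2}_i$ holds on the closed disc and its evaluation at $\varphi=\pm1$ is a true equation; once the target cancels for even $k$, whatever remains is automatically the true statement $0=R_{\pm}$, and the corollary claims only that no information about $\zeta(k,2)$ survives, not that $R_{\pm}=0$ is a new identity requiring an independent proof. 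Likewise, for odd $k$ the left-hand side is real, so the odd powers of $\pi i$ must cancel among themselves and the solved-for value lands in $\mathbb{Q}[\zeta(2),\dots,\zeta(k+2)]$ without further work. The only genuinely unproven input is Conjecture~\ref{conjecturetauk2} itself, which both you and the paper take as given. (Two harmless slips: for odd $k$ your first prefactor is $-2$ rather than $2$, and the target enters the left-hand side at $\varphi=-1$ with coefficient $-1$; neither affects the dichotomy.)
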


\begin{remark}
The readers are referred to the paper \cite{Yang} for the similarities between the complex numbers $\tau^{k,1}_i$, $\tau^{k,2}_i$ and the periods of Calabi-Yau $n$-folds.
\end{remark}

\section{Further prospects} \label{sec:furtherprospects}

We will end this paper with several open questions.
\begin{enumerate}
\item Prove \textbf{Conjecture} \ref{conjecturetauk1} and \textbf{Conjecture} \ref{conjecturetauk2}.

\item Generalize the method to arbitrary double zeta values $\zeta(k,m)$.

\item Generalize the method to arbitrary MZVs.

\item Does there exist any connection between the results in this paper and the periods of Calabi-Yau manifolds \cite{Yang}.
\end{enumerate}

%%\section*{Acknowledgments}

\appendix

\end{document}